\documentclass[reqno, a4paper,12pt]{amsart}

\usepackage[capposition=top]{floatrow}
\usepackage{euscript,eufrak,verbatim}
\usepackage{graphicx}
\usepackage[usenames]{color}
\usepackage[colorlinks,linkcolor=red,anchorcolor=blue,citecolor=blue]{hyperref}
\usepackage{amsmath, mathtools}
\usepackage{mathtools}
\usepackage{amsthm}
\usepackage[all]{xy}
\usepackage{amssymb}
\usepackage{bm}

\usepackage[all]{xy}

\usepackage{mathrsfs}
\usepackage{amscd}

\usepackage{enumitem}

\makeatletter
\numberwithin{equation}{section}


\setlength{\textheight}{23cm}
\setlength{\textwidth}{16cm}
\setlength{\oddsidemargin}{0cm}
\setlength{\evensidemargin}{0cm}
\setlength{\topmargin}{0cm}
%

%

%
%
\theoremstyle{plain}
\newtheorem{main theorem}{Main Theorem}
\newtheorem{theorem}{Theorem}[section]
\newtheorem{lemma}[theorem]{Lemma}

\newtheorem{corollary}[theorem]{Corollary}
\newtheorem{proposition}[theorem]{Proposition}
\newtheorem{claim}[theorem]{Claim}

\theoremstyle{definition}
\newtheorem{definition}[theorem]{Definition}
\newtheorem{remark}[theorem]{Remark}

%
%
%


\newcommand{\diam}{\mathrm{diam}}

\begin{document}
\title{Variational principles of relative weighted topological pressure}

\author{Zhengyu Yin*}

\subjclass{37A35, 37B40}

\keywords{relative topological pressure, weighted topological pressure,  variational principle, zero-dimensional principal extension, conditional entropy}

\maketitle

\begin{abstract}
Recently, M. Tsukamoto \cite{Tsu} (New approach to weighted topological entropy and pressure, Ergod. Theory Dyn. Syst. 43 (2023), 1004–1034) introduced a new approach to defining weighted topological entropy and pressure. Inspired by the ideas in \cite{Tsu}, we define the relative weighted topological entropy and pressure for factor maps and establish several variational principles. One of these results addresses a question raised by D. Feng and W. Huang \cite{FH} (Variational principle for weighted topological pressure, J. Math. Pures Appl. 106 (2016), 411–452), namely, whether there exists a relative version of the weighted variational principle. In this paper, we aim to establish such a variational principle.
Furthermore, we generalize the Ledrappier and Walters type relative variational principle to the weighted version.

\end{abstract}

\section{introduction}
Let $(X,T)$ be a topological dynamical system (TDS) with $X$ being a compact metric space. Given $f$ a continuous real-valued map on $X$, the well-known notion of topological pressure $P(T,f)$ which is a generalization of topological entropy in \cite{Akm} was introduced by D. Ruelle \cite{Rue} in 1973 and was extended by P. Walters \cite{Pw1} to compact spaces with continuous  transformation, and the variational principle was obtained by
\[P(T,f)=\sup\left(h_\mu(T)+\int fd\mu\right),\]
where the supremum is taken over all $T$-invariant Borel probability measures on $X$ endowed with the weak* topology and $h_\mu(T)$ is the measure-theoretical entropy of $\mu$.

Given TDSs $(X, T)$ and $(Y, S)$, we say that $Y$ is a factor of $X$ if there exists a surjective continuous map $\pi: X \to Y$ such that $\pi \circ T = S \circ \pi$. Let $\pi: (X,T) \to (Y,S)$ be a factor map, and let $f$ be a real-valued continuous map on $X$. In \cite{Lp}, F. Ledrappier and P. Walters introduced the notion of relative pressure, which extends the concept of topological pressure, and they proved the following relative variational principle:
\[
\int_Y P(T, \pi^{-1}(y), f) \, d\nu(y) = \sup \left( h_\mu(T \mid S) + \int_X f \, d\mu \right),
\]
where $\nu$ is an $S$-invariant measure on $Y$, and the supremum is taken over all $T$-invariant measures $\mu$ with $\nu = \pi\mu$. This is also referred to as the "Inner Variational Principle" in \cite{DS, Dow}. 

In 2002, T. Downarowicz and J. Serafin \cite{DS} studied fiber entropy and conditional entropy on non-metrizable spaces, obtaining more general variational principles related to these notions. Furthermore, A. Dooley and G. Zhang \cite{DZ} studied the notion of topological fiber entropy and conditional entropy for random dynamical systems over an infinite, countable, discrete amenable group. K. Yan \cite{YK} also explored related topics for general discrete countable amenable group actions, extending classical variational principles in these settings.

In addition to the variational principle, many other topics regarding the relative case for a factor map $\pi$ have been explored. For example, in \cite{GZ}, G. Zhang studied positive conditional entropy and chaos. In \cite{HYZ}, the same author, along with W. Huang and X. Ye, investigated local entropy concerning a factor map, obtaining a local version of the relative variational principle. They also studied the relative entropy tuple, relative C.P.E. extension, and relative U.P.E. extension in \cite{HYZ1}.

Given factor maps $\pi_i: X_i \to X_{i+1}$ for $i = 1, \dots, k$ between TDSs. Motivated by the fractal geometry of self-affine carpets and sponges \cite{Bed, KP, Mcm}, D. Feng and W. Huang \cite{FH} introduced the notion of weighted topological pressure for these factor maps and proved a corresponding variational principle. 
For example,
consider the case of a factor map $\pi: (X,T) \to (Y,S)$, where $\boldsymbol{a} = (a_1, a_2) \in \mathbb{R}^2$ with $a_1 > 0$ and $a_2 \geq 0$. Specifically, they defined the $\boldsymbol{a}$-weighted topological pressure $\mathcal{P}^{\boldsymbol{a}}(T,f)$ for a continuous map $f$ on $X$ with respect to $\pi$, and obtained the following formula:
\begin{align}\label{orithm}
    \mathcal{P}^{\boldsymbol{a}}(T,f) = \sup \left( a_1 h_\mu(X,T) + a_2 h_{\pi\mu}(Y,S) + \int_X f \, d\mu \right),
\end{align}
where the supremum is taken over all $T$-invariant probability measures $\mu$ on $X$, and $\pi\mu$ is the $S$-invariant probability measure on $Y$ induced via $\pi$.

More recently, M. Tsukamoto \cite{Tsu} introduced a new approach to defining weighted pressure and obtains the corresponding variational principle. In \cite{FH}, the authors posed several questions about extending the results of \eqref{orithm}, one of which concerns the existence of a relative version of \eqref{orithm}. Inspired by the ingenious ideas of M. Tsukamoto, we show that, for factor maps $\pi: (X, T) \to (Y, S)$ and $\varphi: (Y, S) \to (Z, R)$, and for $0 \leq \omega \leq 1$, we can define the relative weighted topological pressure $P^\omega_Z(\pi, T, f)$ for $(X, T)$ and $(Y, S)$ with respect to the common factor $(Z, R)$, and establish a relative weighted variational principle for it. Moreover, we generalize some results from the literature \cite{Lp, DS, YK} to a weighted version.
Note that the main proof mainly relies on the technique of zero-dimensional principal extensions, as developed in the work of T. Downarowicz and D. Huczek \cite{DH} (see also \cite{Dow}). Finally, we would like to mention the work of T. Wang and Y. Huang \cite{WY}, in which they discuss weighted entropy in relative settings and derive the relative Brin-Katok formula in a weighted context.

This paper is organized as follows. In Section 2, we provide the definitions of relative weighted topological pressure and establish some fundamental properties. At the end of this section, we state the main results of the paper. In Section 3, following the discussion in \cite{Tsu}, we prove some basic properties of relative weighted pressures. In Section 4, we recall the concept of zero-dimensional principal extensions and apply it to relative weighted pressures. In the final section, we prove the main theorems of the paper.

\section{Relative weighted topological pressure}
\subsection{Relative weighted topological pressures}
Let $X$ be a compact metric space and $T:X\to X$ a continuous self-map on $X$. We call the pair $(X,T)$ a  \textit{topological dynamical system} (TDS for short).
Consider a subset $\Omega\subset X$, a class $\mathcal{U}$ of subsets of $X$ is said to be a \textit{cover} of $\Omega$ if 
$\Omega\subset \bigcup_{U\in \mathcal{U}}U.$
We always assume that a cover is finite, and the class of finite cover (finite open cover, cover with disjoint subsets) of $\Omega$ is denoted by $\mathcal{C}_X(\Omega)$ (resp. $\mathcal{C}^o_X(\Omega)$, $\mathcal{P}_X(\Omega)$). Particularly, if $\Omega=X$, we simply write $\mathcal{C}_X$ (resp. $\mathcal{C}_X^o$, $\mathcal{P}_X$).

Let $\mathcal{U},\mathcal{V}\in\mathcal{C}_X$. $\mathcal{V}$ is said to be \textit{finer than} $\mathcal{U}$ (write $\mathcal{U}\preceq\mathcal{V}$) if for each $V\in \mathcal{V}$ there is $U\in \mathcal{U}$ such that $V\subset U$. As usual, we define 
$\mathcal{U}\vee\mathcal{V}=\{U\cap V:U\in\mathcal{U}, V\in\mathcal{V}\}.$
For any $n<m\in \mathbb{N}$, we define 
$\mathcal{U}^m_n=\bigvee_{i=n}^{m-1}T^{-i}\mathcal{U}$
and write $\mathcal{U}^n=\mathcal{U}_0^{n-1}$ for short.

Let $(X,T)$ be a TDS and $d$ a metric on $X$. For each $n\in \mathbb{N}$, we define a compatible metric on $X$ by
\begin{align}\label{eq1}
    d_n(x_1,x_2)=\max_{0\leq k<n}d(T^kx_1,T^kx_2)\text{ for all }x_1,x_2\in X.
\end{align}
In this paper, we use the symbol $\diam(U,d_n)$ to denote the diameter of   $U$ with respect to metric $d_n$.

We denote $C(X)$ the class of all real-valued continuous functions on $X$. For each $f\in C(X)$, we write 
$|f|=\sup_{x\in X}|f(x)|$ and define
\[\mathbb{S}_nf(x)=\sum_{k=0}^{n-1}f(T^kx)\text{ for all }x\in X.\]
To address the TDS $(X,T)$, sometimes, we write $d_n^T$ and $\mathbb{S}_n^Tf$ for specific.

Let $\Omega\subset X$. For each $n\in \mathbb{N}$, $\varepsilon>0$ and $f\in C(X)$, we define
\begin{align*}
P(T,\Omega,f,n,\varepsilon)=\inf_{\mathcal{U}\in \mathcal{C}^o_X(\Omega)}\left\{\sum_{U\in \mathcal{U}} e^{\sup_{x\in U}\mathbb{S}_nf(x)}: \diam(U,d_n)<\varepsilon\text{ for all }U\in \mathcal{U}\right\},
\end{align*}
then we define 
\[P(T,\Omega,f,\varepsilon)=\limsup_{n\to \infty}\frac{1}{n}\log P(T,\Omega,f,n,\varepsilon),\]
and the {topological pressure} of $\Omega$ is defined by 
$$P(T,\Omega,f)=\lim_{\varepsilon\to 0}P(T,\Omega,f,\varepsilon).$$
Particularly, if $\Omega=X$, the \textit{topological pressure} of $(X,T)$ is given by $$P(T,f)= P(T,X,f).$$ 
    If $f\equiv 0$, we define 
   $ h_{top}(T,\Omega,n,\varepsilon)=P(T,\Omega,0,n,\varepsilon),\,\,h_{top}(T,\Omega,\varepsilon)=P(T,\Omega,0,\varepsilon)$
    and the \textit{topological entropy} of $\Omega$ is defined by $$h_{top}(T,\Omega)=P(T,\Omega,0).$$

Then, we introduce the relative weighted pressure between TDSs. Our setting is based on two factor maps: $\pi: (X, T) \to (Y, S)$ and $\varphi: (Y, S) \to (Z, R)$ with the composite map $\psi = \varphi \circ \pi: (X, T) \to (Z, R)$.

Let $d$ and $d'$ be metrics on $X$ and $Y$, respectively, and $K\subset Z$. For each $\varepsilon>0$, $0\leq \omega\leq 1$, and $f\in C(X)$, we define 
\begin{align}\label{eq2.0}
P_Z^\omega&(\pi,T,K,f,n,\varepsilon)=\notag\\&\inf_{\mathcal{V}\in \mathcal{C}^o_Y(\varphi^{-1}K)}\left\{\sum_{V\in \mathcal{V}}(P(T,\pi^{-1}(V),f,n,\varepsilon))^\omega:\diam(V,d'_n)<\varepsilon\text{ for all }V\in \mathcal{V}\right\},
\end{align}
and set
\[
P_Z^\omega(\pi,T,K,f,\varepsilon)=\limsup_{n\to \infty}\frac{1}{n}\log P_Z^\omega(\pi,T,K,f,n,\varepsilon),
\]
then we define
\[
P_Z^\omega(\pi,T,K,f)=\lim_{\varepsilon\to 0}P_Z^\omega(\pi,T,K,f,\varepsilon).
\]
Particularly, if $K=\{z\}$ is a singleton, we write 
\[
P_Z^\omega(\pi,T,z,f,n,\varepsilon)=P_Z^\omega(\pi,T,\{z\},f,n,\varepsilon)
\quad \text{and} \quad
P_Z^\omega(\pi,T,z,f)=P_Z^\omega(\pi,T,\{z\},f).
\]
 
 In addition, if $f\equiv 0$, we put 
$h_Z^\omega(\pi,T,K)=P_Z^\omega(\pi,T,K,0).$

Here, we borrow a topological result in \cite[Chapter 3]{Ke} (see also \cite[Appendix A1]{DS}).

\begin{lemma}\label{lem2.7}
    Let $\pi: X\to Y$ be a quotient (surjective) map between two topological spaces. Then $\pi$ is closed if and only if for any open subset $U$ of $X$, the union of all fibers of $\pi$ contained in $U$ is open.
\end{lemma}

Recall that a real-valued function $f:X\to \mathbb{R}$ is \textit{upper semicontinuous} if the set $\{x\in X : f(x)<r\}$ is open for any $r\in \mathbb{R}$. 

\begin{proposition}\rm{(Upper semicontinuous)}\label{prop2.9}
      Let $f\in C(X)$ and $\varepsilon>0$. 
      \begin{enumerate}
          \item For any $n\in \mathbb{N}$, the function 
          $z\mapsto P_Z^\omega(\pi,T,z,f,n,\varepsilon)$
          is upper semicontinuous. Thus, 
          $z\mapsto P_Z^\omega(\pi,T,z,f)$
          is Borel measurable.
          \item There exists a constant $C(\varepsilon)>0$ such that 
              \[
              \frac{1}{n}\log P_Z^\omega(\pi,T,z,f,n,\varepsilon)<C(\varepsilon) \text{ for all }n\in \mathbb{N}, z\in Z.
              \]
             
      \end{enumerate}
\end{proposition}

\begin{proof}
    (1) Suppose $P_Z^\omega(\pi,T,z,f,n,\varepsilon)<C(z,n,\varepsilon)$ for some positive number $C(z,n,\varepsilon)$, then there is $\mathcal{V}=\{V_1,\cdots,V_p\}\in\mathcal{C}_Y^o(\varphi^{-1}(z))$ with $\diam(V_i,d'_n)<\varepsilon$, $i=1,\cdots,p,$ such that 
    \[\sum_{i=1}^p(P(T,\pi^{-1}(V_i),f,n,\varepsilon))^\omega<C(z,n,\varepsilon).\]
  By Lemma \ref{lem2.7} there is an open subset $W$ of $Y$ such that $W\subset V_1\cup\cdots\cup V_p$ consisting of fibers of $\varphi$ and $\varphi^{-1}(z)\subset W$. Then, we have
    \[\sum_{i=1}^p(P(T,\pi^{-1}(V_i\cap W),f,n,\varepsilon))^\omega<C(z,n,\varepsilon)\]
    Moreover, $\varphi(W)$ is also an open subset of $Z$\footnote{Every closed continuous surjective map is a quotient map, see \cite{Ke} Chapter 3.}. Hence, for any $z_0\in \varphi(W)$, $\varphi^{-1}(z_0)\subset (V_1\cap W)\cup\cdots\cup(V_p\cap W)$ and $P_Z^\omega(\pi,T,z_0,f,n,\varepsilon)<C(z,n,\varepsilon)$, which means $z\mapsto P_Z^\omega(\pi,T,z,f,n,\varepsilon)$ is upper semi-continuous and the function $z\mapsto P_Z^\omega(\pi,T,z,f)$ is Borel measurable.

    (2) Let $N(\varepsilon,Y)$ be the smallest number of open sets of diameter $\varepsilon$ required to cover $Y$ and $N(\varepsilon,X)$ the smallest number of open sets of diameter $\varepsilon$ required to cover $X$. Then  
    \begin{align*}
        P^\omega_Z(\pi,T,z,f,n,\varepsilon)\leq \sum_{j=1}^{N(\varepsilon,Y)^n}\left(\sum_{i=1}^{N(\varepsilon,X)^n}e^{n|f|}\right)^\omega 
    \end{align*}
  for all $n\in \mathbb{N}$ and $z\in Z$.
    Hence, by letting $C(\varepsilon)=\log N(\varepsilon,Y)+\omega\log N(\varepsilon,X)+|f|$ we obtain
    \begin{align*}
              \frac{1}{n}\log P_Z^\omega(\pi,T,z,f,n,\varepsilon)<C(\varepsilon).
          \end{align*}
          for all $n\in \mathbb{N}$ and $z\in Z.$
\end{proof}

Let $(X,T)$ be a TDS with metric $d$ on $X$. For any $m,n\in \mathbb{N}$ we define a pseudo-metric on $X$ by 
\[d_{m(+n)}(x_1,x_2)=\max_{0\leq k<m}d(T^{k+n}x_1,T^{k+n}x_2)\text{ for all }x_1,x_2\in X.\]
Note that $d_{m(+n)}$ is not necessarily a metric, but the ball $B_{d_{m(+n)}}(x,\varepsilon)=\{y:d_{m(+n)}(y,x)<\varepsilon\}=\bigcap_{0\leq k<m}T^{-(k+n)}B_d(T^{k+n}x,\varepsilon)$ is still open for each $x\in X$.
For each $f\in C(X)$, we define
\[\mathbb{S}_{m(+n)}f(x)=\sum_{k=n}^{n+m-1}f(T^kx).\]  

For any $\Omega\subset X$ and $\varepsilon>0$, we define
\begin{align*}
P(T,\Omega,&f,m(+n),\varepsilon)=
\\&\inf_{\mathcal{U}\in \mathcal{C}^o_X(\Omega)}\left\{\sum_{U\in \mathcal{U}} e^{\sup_{x\in U}\mathbb{S}_{m(+n)}f(x)}: \diam(U,d_{m(+n)})<\varepsilon\text{ for all }U\in \mathcal{U}\right\}.
\end{align*}
Similarly, for each $K\subset Z$ the quantity $P_Z^\omega(\pi,T,K,f,m(+n),\varepsilon)$ can be defined in the same way, that is, 
\begin{align*}
P_Z^\omega&(\pi,T,K,f,m(+n),\varepsilon)=\notag\\&\inf_{\mathcal{V}\in \mathcal{C}^o_Y(\varphi^{-1}K)}\left\{\sum_{V\in \mathcal{V}}(P(T,\pi^{-1}(V),f,m(+n),\varepsilon))^\omega:\diam(V,d'_{m(+n)})<\varepsilon\text{ for all }V\in \mathcal{V}\right\}.
\end{align*}

 Recall that a sequence $\mathcal{G}=\{g_n : n\in \mathbb{N}\}$ of nonnegative functions on TDS $(Z,R)$ is \textit{subadditive} if for any $m, n \in \mathbb{N}$ and $z \in Z$, we have
\[
g_{n+m}(z) \leq g_n(z) + g_m(R^n z).
\]
Then if $g_n\in \mathcal{G}$ are bounded for all $n \in \mathbb{N}$, it is clear that  
\[
\sup_{z \in Z} g_{m+n}(z) \leq \sup_{z \in Z} g_m(z) + \sup_{z \in Z} g_n(z).
\]
Thus, by Fekete's subadditive lemma 
\[
\lim_{n \to \infty} \frac{\sup_{z \in Z} g_n(z)}{n}=\inf_{n \in \mathbb{N}} \frac{\sup_{z \in Z} g_n(z)}{n}.
\]

Moreover, the well-known Kingman's subadditive theorem states that given $\kappa$ an $R$-invariant probability measure, and $\mathcal{G}=\{g_n : n \in \mathbb{N}\}$ a sequence of nonnegative integrable subadditive functions on $(Z,R)$. Then \[
    \lim_{n\to \infty}\frac{1}{n}g_n(z)\text{ exists }\kappa-a.e.,
    \text{ and }
    \int_Z\lim_{n\to \infty}\frac{1}{n}g_n(z)d\kappa(z)
    =\lim_{n\to \infty}\frac{1}{n}\int_Z g_n(z)d\kappa(z).
    \]
    In particular, if $\kappa$ is an $R$-invariant ergodic measure on $Z$, then 
    \[
    \lim_{n\to \infty}\frac{1}{n}g_n(z)
    =\lim_{n\to \infty}\frac{1}{n}\int_Z g_n(z)d\kappa(z)\quad \kappa-a.e.
    \]

\begin{proposition}\rm{(Subadditive)}\label{lemm212}
   Let $\pi:(X,T)\to (Y,S)$ and $\varphi:(Y,S)\to (Z,R)$. For each $\varepsilon>0$, $\{\log P_Z^\omega(\pi,T,\cdot,f,n,\varepsilon):n\in \mathbb{N}\}$ is a sequence of bounded nonnegative subadditive functions on $(Z,R)$.
\end{proposition}

\begin{proof}
Let $z\in Z$, $n\in \mathbb{N}$ and $\mathcal{V}=\{V_1,\cdots,V_p\}\in \mathcal{C}^o_Y(\varphi^{-1}(z))$ with $\diam(V_i,d'_n)< \varepsilon$ for all $i=1,\cdots,p$. Take $\mathcal{U}=\{U_1,\cdots, U_q\}\in \mathcal{C}^o_Y(\varphi^{-1}(z))$  with $\diam(U_j,d'_{m(+n)})<\varepsilon$ for all $j=1,\cdots,q$. Then
         \begin{align}\label{eq(2.22)}
             &\left(\sum_{i=1}^p(P(\pi,T,\pi^{-1}(V_i),f,n,\varepsilon))^\omega\right)\cdot\left(\sum_{j=1}^q(P(\pi,T,\pi^{-1}(U_j),f,m(+n),\varepsilon))^\omega\right)
             \notag\notag\\ &\geq \sum_{i=1}^p\sum_{j=1}^q\left(P(\pi,T,\pi^{-1}(V_i),f,n,\varepsilon)\cdot P(\pi,T, \pi^{-1}(U_j),f,m(+n),\varepsilon)\right)^\omega\notag\\
             &\geq \sum_{i,j}(P(T,\pi^{-1}(V_i\cap U_j),f,m+n,\varepsilon))^\omega.
             \end{align}
             It is easy to check that $\mathcal{V}\vee\mathcal{U}$ is a class of open subsets that covers $\varphi^{-1}(z)$ with $\diam(U\cap V,d'_{m+n})< \varepsilon$ for any $U\cap V\in \mathcal{V}\vee\mathcal{U}$, hence,
             \begin{align*}
                \eqref{eq(2.22)} \geq P_Z^\omega(\pi,T,z,f,m+n,\varepsilon).
             \end{align*}
         As $\mathcal{V}$ and $\mathcal{U}$ can be taken arbitrarily, we obtain 
         \[ P_Z^\omega(\pi,T,z,f,m+n,\varepsilon)\leq  P_Z^\omega(\pi,T,z,f,n,\varepsilon)\cdot P_Z^\omega(\pi,T,z,f,m(+n),\varepsilon),\]

    Let $\mathcal{V}=\{V_1,\cdots,V_p\}\in \mathcal{C}^o_Y(\varphi^{-1}(R^nz))$ with $\diam(V_i,d'_m)<\varepsilon$, $i=1,\cdots,p$,   and $\mathcal{U}=\{U_{ij}:1\leq i\leq p, 1\leq j\leq \beta_i\}$ a class of open subsets in $X$ with $\diam(U_{ij},d_m)<\varepsilon$ and $\pi^{-1}(V_i)\subset \bigcup_{1\leq j\leq \beta_i }U_{ij}$ such that
      \begin{align}\label{qqq2.51}
\sum_{i=1}^p\left(\sum_{U_{ij}}e^{\sup_{U_{ij}}\mathbb{S}_mf}\right)^\omega\leq P^\omega_Z(\pi,T,R^nz,f,m,\varepsilon)+\delta.
      \end{align}
     Then $\varphi^{-1}(z)\subset S^{-n}\varphi^{-1}(R^nz)\subset S^{-n}V_1\cup\cdots\cup S^{-n}V_p$ with $\diam(S^{-n}V_i,d'_{m(+n)})<\varepsilon$ and $T^{-n}\mathcal{U}$ is a class of open subsets in $X$ such that $\pi^{-1}(S^{-n}V_i)\subset \bigcup_{1\leq j\leq \beta_i}T^{-n}U_{ij}$ and $\diam(T^{-n}U_{ij},d_{m+(n)})<\varepsilon$ and we have
       \begin{align*}
           \sum_{S^{-n}V_j}\left(\sum_{T^{-n}U_{ij}}e^{\sup_{T^{-n}U_{ij}}\mathbb{S}_{m(+n)}f}\right)^\omega=\sum_{i=1}^p\left(\sum_{U_{ij}}e^{\sup_{U_{ij}}\mathbb{S}_mf}\right)^\omega.
       \end{align*}
       Thus,
       \[ P_Z^\omega(\pi,T,z,f,m(+n),\varepsilon)\leq\sum_{S^{-n}V_j}\left(\sum_{T^{-n}U_{ij}}e^{\sup_{T^{-n}U_{ij}}\mathbb{S}_{m(+n)}f}\right)^\omega.\]
       Therefore, combining \eqref{qqq2.51} and as $\delta$ can be arbitrarily chosen, we have
       \[ P_Z^\omega(\pi,T,z,f,m+n,\varepsilon)\leq  P_Z^\omega(\pi,T,z,f,n,\varepsilon)\cdot P^\omega_Z(\pi,T,R^nz,f,m,\varepsilon),\]
       which means that $\{\log P_Z^\omega(\pi,T,\cdot,f,n,\varepsilon):n\in \mathbb{N}\}$ is a sequence of bounded nonnegative subadditive functions on $Z$.
\end{proof}
Combining Kingman's subadditive theorem and Proposition \ref{lemm212}, we have the following statement.
\begin{theorem}\label{thm2.5}
 For any $\varepsilon>0$ and $f\in C(X)$, we have
 \begin{enumerate}
     \item [\rm{(1)}]  the limit \[P^\omega_Z(\pi,T,f,\varepsilon)=\lim_{n\to \infty}\frac{\sup_{z\in Z}\log P^\omega_Z(\pi,T,z,f,n,\varepsilon)}{n}\] exists;
     \item [\rm{(2)}] if $\kappa$ is an $R$-invariant measure on $Z$, then 
  \[P^\omega_Z(\pi,T,z,f,\varepsilon)=\lim_{n\to \infty}\frac{1}{n}\log P^\omega_Z(\pi,T,z,f,n,\varepsilon)\quad \kappa-a.e.,\]
   and
    \begin{align*}
        \int_ZP^\omega_Z(\pi,T,z,f,\varepsilon)d\kappa(z)=\lim_{n\to \infty}\frac{1}{n}\int_Z\log P^\omega_Z(\pi,T,z,f,n,\varepsilon)d\kappa(z).
    \end{align*}
 \end{enumerate}
 
\end{theorem}

\begin{definition}\label{defn2.1}
   Let $\pi:(X,T)\to (Y,S)$ and $\varphi:(Y,S)\to (Z,R)$ be two factor maps. For each $f\in C(X)$ and $0\leq\omega\leq 1$, we define
   \begin{align}\label{eqq2.3}
P^\omega_Z(\pi,T,f)=\lim_{\varepsilon\to 0}\left(\lim_{n\to \infty}\frac{\sup_{z\in Z}\log P^\omega_Z(\pi,T,z,f,n,\varepsilon)}{n}\right).
\end{align}
to be the $\omega$-\textit{relative weighted topological pressure} of $\pi$.

If $f\equiv 0$, we define the $\omega$-\textit{relative weighted topological entropy} of $\pi$ by
    \[h^\omega_Z(\pi,T)=\lim_{\varepsilon\to 0}\left(\lim_{n\to \infty}\frac{\sup_{z\in Z}\log P^\omega_Z(\pi,T,z,0,n,\varepsilon)}{n}\right),\]
\end{definition}

\begin{remark}
   If $Z=\{*\}$ a singleton, the definition \eqref{eqq2.3} returns to the $\omega$-\textit{weighted topological pressure} for the factor map $\pi:(X,T)\to (Y,S)$ defined in \cite{Tsu}, that is,
    \[P^\omega(\pi,T,f)=\lim_{\varepsilon\to 0}\left(\lim_{n\to \infty}\displaystyle\frac{\log P_{*}^\omega(\pi, T, *,f,n,\varepsilon)}{n}\right).\] 
\end{remark}

\subsection{Conditional metric entropy}

Let $(X,T)$ be a TDS. We denote $\mathcal{M}(X)$, $\mathcal{M}(X, T)$  by the set of Borel probability measure, $T$-invariant probability measure, respectively. Given $\mu\in \mathcal{M}(X)$, consider the probability measure space $(X,\mathcal{B}_X,\mu)$ and $\mathcal{A}\in \mathcal{P}_X$. The \textit{partition entropy} of $\mathcal{A}$ is defined by 
\[H_\mu(\mathcal{A})=\sum_{A\in \mathcal{A}}-\mu(A)\log\mu(A),\]
 we assume $0\log0=0$. If $A$ is a subset of $X$ with $\mu(A)>0,$ write $\mu_A(B)=\mu(A\cap B)/\mu(A)$ for all $B\in \mathcal{B}_X$.  Let $\mathcal{B}\in \mathcal{P}_X$ be another finite partition of $X$, \textit{the conditional entropy of} $\mathcal{B}$ with respect to $\mathcal{A}$ is defined by 
\[H_\mu(\mathcal{B}|\mathcal{A})=\sum_{A\in \mathcal{A}}\mu(A)H_{\mu_A}(\mathcal{B})=\int_XH_{\mu_{A}}(\mathcal{B})d\mu(x).\]

Let $\pi:(X,T)\to (Y,S)$ be a factor map and $\mu\in \mathcal{M}(X,T)$, we write $\nu=\pi\mu(A)=\mu(\pi^{-1}A)$ for all $A\in \mathcal{B}_Y$, then $\nu\in \mathcal{M}(Y,S)$. Recall that $\mu$ admits a disintegration $\mu=\int_Y\mu_yd\nu(y)$ over $Y$, where $\mu_y$ is the fiber measure ($\mu_y(\pi^{-1}(y))=1$), and for each $\mathcal{A}\in \mathcal{P}_X$,  we define
\[H_\mu(\mathcal{A}|Y)=H_\mu(\mathcal{A}|\pi^{-1}\mathcal{B}_Y)=\int_YH_{\mu_y}(\mathcal{A})d\nu(y),\]
then the \textit{relative entropy} of $\mathcal{A}$ with respect to $\pi$ is defined by 
\[h_\mu(T,\mathcal{A}|Y)=\lim_{n\to \infty}\frac{1}{n}H_\mu(\mathcal{A}^{n-1}_0|\pi^{-1}\mathcal{B}_Y)=\lim_{n\to \infty}\frac{1}{n}\int_Y H_{\mu_y}(\mathcal{A}_0^{n-1})d\nu(y),\]
and the \textit{relative entropy} $h_\mu(T|S)$ of $(X,T)$ with respect to $(Y,S)$ is defined as follows (see \cite{Lp})
\[h_\mu(T|S)=\sup\{h_\mu(T,\mathcal{A}|Y)|\mathcal{A}\in \mathcal{P}_X\}.\]

We have the following standard properties (cf. \cite{Pw}).

\begin{lemma}\label{lem2.33}
    Let $\pi: (X, T) \to (Y, S)$ be a factor map, and let $\mu \in \mathcal{M}(X, T)$. For any $\mathcal{A}, \mathcal{B} \in \mathcal{P}_X$, the following hold:
    \begin{enumerate}
        \item $H_\mu(\mathcal{A} \vee \mathcal{B} \mid Y) \leq H_\mu(\mathcal{A} \mid Y) + H_\mu(\mathcal{B} \mid Y).$
        \item $h_\mu(T, \mathcal{A} \mid Y) \leq h_\mu(T, \mathcal{B} \mid Y) + H_\mu(\mathcal{A} \mid \mathcal{B}).$
    \end{enumerate}
\end{lemma}

\subsection{Main results}

With the above notations, recall that Leddrapier and Walters in \cite{Lp} prove the following result:
\begin{theorem}\label{thmm2.3}
   Let $\pi:(X,T)\to (Y,S)$ be a factor map. For any $f\in C(X)$ and $\nu\in \mathcal{M}(Y,S)$, 
    \[\int_YP(T,\pi^{-1}(y),f)d\nu(y)=\sup\left(h_\mu(T|S)+\int_Xfd\mu\right),\]
    where the supremum is taken over all $\mu\in \mathcal{M}(X,T)$ with $\nu=\pi\mu$.
\end{theorem}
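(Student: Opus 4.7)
The plan is to establish the two inequalities separately; write $\Phi(\nu)$ for the integral on the left and $\Sigma(\nu)$ for the supremum on the right.

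\emph{First inequality: $\Sigma(\nu)\le\Phi(\nu)$.} Fix $\mu\in \mathcal{M}(X,T)$ with $\pi\mu=\nu$, disintegrate $\mu=\int_Y\mu_y\,d\nu(y)$, and choose a finite partition $\alpha\in\mathcal{P}_X$. The key input is the Jensen-type inequality
\[
H_{\mu_y}(\alpha_0^{n-1})+\int\mathbb{S}_n f\,d\mu_y\le \log\sum_{A\in \alpha_0^{n-1}}e^{\sup_A\mathbb{S}_nf},
\]
obtained from $-x\log x+ax\le x\log(e^a/x)$ together with concavity of $\log$. Taking $\alpha$ subordinate to an open cover of $X$ of small diameter and invoking Lemma \ref{lem2.33} to absorb the partition-versus-cover multiplicity error, the right-hand side can be compared to $\log P(T,\pi^{-1}(y),f,n,\varepsilon)$. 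Integrating over $y\in Y$, dividing by $n$, and taking $n\to\infty$ followed by $\varepsilon\to 0$ via Fatou's lemma yields $h_\mu(T,\alpha|Y)+\int f\,d\mu\le \Phi(\nu)$; supping over $\alpha$ gives $\Sigma(\nu)\le \Phi(\nu)$.

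\emph{Second inequality: $\Phi(\nu)\le\Sigma(\nu)$.} This is the harder direction and proceeds by constructing a near-optimal invariant measure. For fixed small $\varepsilon>0$ and large $n$, a measurable selection provides for each $y\in Y$ an open cover $\mathcal{U}_n(y)$ of $\pi^{-1}(y)$ of $d_n$-diameter less than $\varepsilon$ nearly attaining the infimum defining $P(T,\pi^{-1}(y),f,n,\varepsilon)$. I form the discrete fibre probability $\sigma_n^y$ on $\pi^{-1}(y)$ placing mass proportional to $e^{\sup_U\mathbb{S}_nf}$ at a chosen point of each $U\in \mathcal{U}_n(y)$, and integrate $\sigma_n:=\int_Y\sigma_n^y\,d\nu(y)$ to obtain a probability on $X$ projecting to $\nu$. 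Averaging under $T$ gives $\tilde\sigma_n:=\frac{1}{n}\sum_{k=0}^{n-1}T^k_*\sigma_n$, whose weak$^*$ subsequential limit $\mu$ lies in $\mathcal{M}(X,T)$ and still satisfies $\pi\mu=\nu$ by $S$-invariance of $\nu$. A Misiurewicz-style computation with a partition of $X$ whose boundary has $\mu$-measure zero then produces $h_\mu(T|S)+\int f\,d\mu\ge \int_Y P(T,\pi^{-1}(y),f,\varepsilon)\,d\nu(y)-\delta(\varepsilon)$, and letting $\varepsilon\to 0$ completes the proof.

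The principal obstacle is arranging the selections $\mathcal{U}_n(y)$ and the resulting fibre measures $\sigma_n^y$ to depend $\nu$-measurably on $y$, so that the fibrewise integration makes sense and the limit measure $\mu$ disintegrates over $\nu$ with the predicted fibre structure. A secondary delicate point is the Fatou-type interchange of $\limsup_n$ with integration over $\nu$ in the first inequality, which requires a uniform integrable dominant for $\frac{1}{n}\log P(T,\pi^{-1}(y),f,n,\varepsilon)$; and in the second inequality, the Misiurewicz partition must have small boundary not only for $\mu$ but in a sense uniform across fibres, handled by choosing partitions whose boundaries project into $\nu$-null sets.
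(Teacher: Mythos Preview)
The paper does not prove this theorem itself---it is quoted from \cite{Lp}---but its proof of the generalization Theorem~\ref{thm2.6} follows the Ledrappier--Walters template, so one can compare against that.

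Your first-inequality sketch has a gap. The Jensen step is fine, but Lemma~\ref{lem2.33} does not absorb the ``partition-versus-cover multiplicity error'': that lemma only compares two partitions, whereas what is needed here is to bound a partition sum by the infimum over \emph{open} covers defining $P(T,\pi^{-1}(y),f,n,\varepsilon)$. The paper's device (Claims~\ref{cl4.2}--\ref{cl4.5}, following Misiurewicz) is to replace $\alpha$ by a partition whose nontrivial atoms are pairwise disjoint compacta, so that any set of $d_n$-diameter $<\varepsilon$ meets at most $2^n$ atoms of the $n$-th refinement; the resulting additive constant $2\log 2+2$ is then killed by the amplification trick $T\mapsto T^k$ (Proposition~\ref{prop2.2}). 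Your proposal omits both the compact-subset step and the amplification, and without them the comparison you assert does not go through.

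For the second inequality your route differs substantively from the paper's. Instead of a measurable selection $y\mapsto\mathcal{U}_n(y)$ of near-optimal covers and a fibre-wise integration of $\sigma_n^y$, the paper (and \cite{Lp}) first handles ergodic $\nu$ by picking a \emph{single} generic point and constructing $\sigma_n,\mu_n$ on that one fibre (Proposition~\ref{prop4.16}); since $\nu$-a.e.\ point is generic this already yields the integrated bound. General $\nu$ is then treated by ergodic decomposition together with a measurable section at the level of invariant \emph{measures}, not covers. This sidesteps entirely what you flag as the ``principal obstacle''. The paper further reduces beforehand to zero-dimensional $X$ and $Y$ via principal extensions (Proposition~\ref{prop4.10}), which permits clopen partitions and eliminates the boundary-measure-zero issues you raise. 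Your fibre-wise scheme could perhaps be made rigorous, but it is considerably harder to execute; the generic-point/ergodic-decomposition route is both the standard one and the cleaner one.
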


Consider factor maps $\pi:(X,T)\to (Y,S)$, $\varphi:(Y,S)\to (Z,R)$ and $0\leq \omega\leq 1$. We can now state the weighted version of Leddrapier-Walter's type variational principle.
\begin{theorem}\rm{(Variational Principle I)}\label{thm2.6}
   Let $\pi:(X,T)\to (Y,S)$ and $\varphi:(Y,S)\to (Z,R)$ with $\psi=\varphi\circ\pi$ and $0\leq \omega\leq 1$. For any $f\in C(X)$ and $\kappa\in \mathcal{M}(Z,R)$, we have 
    \begin{align*}
        \int_ZP_Z^\omega(\pi,T,z,f)d\kappa(z)=\sup\left(\omega h_\mu(T|R)+(1-\omega)h_{\pi\mu}(S|R)+\omega\int_Xfd\mu\right),
    \end{align*}
    where the supremum is taken over all $\mu\in \mathcal{M}(X,T)$ with $\kappa=\psi\mu=\varphi\circ\pi(\mu)$.
\end{theorem}
Let $(X,T)$ be a TDS and $K\subset X$. We define \[N(T,K,n,\varepsilon)=\inf_{\mathcal{U}\in \mathcal{C}^o_X(K)}\left\{|\mathcal{U}|:\diam(U,d_n)<\varepsilon\text{ for all }U\in \mathcal{U}\right\}.\]
Let $\pi:(X,T)\to (Y,S)$ be a factor map. Recall the \textit{topological conditional entropy} $h_{top}(T,X|Y)$ of $\pi$ is defined by 
\begin{align}\label{eqqq2.3}
    h_{top}(T,X|Y)=\lim_{\varepsilon\to 0}\left(\lim_{n\to \infty}\frac{\sup_{y\in Y}\log N(T,\pi^{-1}y,n,\varepsilon)}{n}\right).
\end{align}

In \cite{DS}, Downarowicz and Serafin introduced the notions of relative topological entropy $h_{top}(T, X|Y)$. With the relative measure-theoretical entropy $h_\mu(T|S)$ for invariant measure $\mu\in\mathcal{M}(X,T)$, they proved the following relative variational principle:
  \begin{theorem}\label{thmm2.5}
      Let $\pi:(X,T)\to (Y,S)$ be a factor map. Then 
      \[h_{top}(T,X|Y)=\sup_{y\in Y}h_{top}(T,\pi^{-1}y)=\sup_{\mu\in \mathcal{M}(X,T)}h_\mu(T|S).\]
  \end{theorem}

Let $\pi:(X,T)\to (Y,S)$ and $\varphi:(Y,S)\to (Z,R)$ be two factor maps, $f\in C(X)$ and $0\leq \omega\leq 1$. We state the variational principles for $\omega$-relative weighted topological pressure as follows:
\begin{theorem}\rm{(Variational principle II)}\label{thm2.7}
   For any $f\in C(X)$ and $0\leq \omega\leq 1$, we have
      \[P^\omega_Z(\pi,T,f)=\sup_{\mu\in \mathcal{M}(X,T)}\left(\omega h_\mu(T|R)+(1-\omega)h_{\pi\mu}(S|R)+\omega\int_Xfd\mu\right)\] and
      \[\sup_{z\in Z}P_Z^\omega(\pi,T,z,f)=\sup_{\mu\in \mathcal{M}(X,T)}\left(\omega h_\mu(T|R)+(1-\omega)h_{\pi\mu}(S|R)+\omega\int_Xfd\mu\right).\]
      Therefore,
  \[ \sup_{z\in Z}P_Z^\omega(\pi,T,z,f)=P^\omega_Z(\pi,T,f).\]
\end{theorem}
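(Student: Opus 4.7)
My strategy is to close the cycle of inequalities
\[\sup_{z\in Z} P_Z^\omega(\pi,T,z,f) \le P^\omega_Z(\pi,T,f) \le \sup_{\mu\in\mathcal{M}(X,T)}\!\!\Bigl(\omega h_\mu(T|R)+(1-\omega)h_{\pi\mu}(S|R)+\omega\!\int_X\! f\,d\mu\Bigr) \le \sup_{z\in Z} P_Z^\omega(\pi,T,z,f),\]
which forces equality throughout and delivers both displayed identities at once. Two of these inequalities are essentially immediate. The first is the routine exchange $\sup_z\limsup_n\le\limsup_n\sup_z$, combined with the observation that $\lim_{\varepsilon\to 0}$ commutes with $\sup_z$ by monotonicity of $P_Z^\omega(\pi,T,z,f,\varepsilon)$ in $\varepsilon$. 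The third follows from Theorem~\ref{thm2.6} applied to $\kappa:=\psi\mu\in\mathcal{M}(Z,R)$: because $\mu$ itself lies in the set $\{\mu':\psi\mu'=\kappa\}$, the target expression is bounded above by $\int_Z P_Z^\omega(\pi,T,z,f)\,d\kappa(z)$, which in turn is at most $\sup_{z}P_Z^\omega(\pi,T,z,f)$.

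The heart of the argument is the middle inequality $P^\omega_Z(\pi,T,f)\le\sup_\mu(\,\cdot\,)$. Setting $\Phi_n(z):=\log P^\omega_Z(\pi,T,z,f,n,\varepsilon)$ for a fixed $\varepsilon>0$, I would first establish the orbital subadditivity
\[\Phi_{n+m}(z)\le\Phi_n(z)+\Phi_m(R^n z),\]
obtained by wedging near-optimal covers $\mathcal V$ of $\varphi^{-1}(z)$ and $\mathcal V'$ of $\varphi^{-1}(R^n z)$ into $\mathcal V\vee S^{-n}\mathcal V'$ (which is a $d'_{n+m}$-small cover of $\varphi^{-1}(z)$ because $\varphi^{-1}(z)\subseteq S^{-n}\varphi^{-1}(R^n z)$), and combining with the submultiplicativity $P(T,\pi^{-1}(V\cap S^{-n}V'),f,n+m,\varepsilon)\le P(T,\pi^{-1}V,f,n,\varepsilon)\cdot P(T,\pi^{-1}V',f,m,\varepsilon)$ raised to the $\omega$-th power inside the defining sum. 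Next, for each $n$ I pick $z_n\in Z$ approximately attaining $\sup_z\Phi_n(z)$, form the Krylov--Bogolyubov empirical measures $\kappa_n:=\frac{1}{n}\sum_{k=0}^{n-1}\delta_{R^k z_n}$, and extract a weak-$*$ subsequential limit $\kappa\in\mathcal{M}(Z,R)$. Telescoping the subadditivity in blocks of length $m$ gives $\Phi_n(z_n)/n\le \frac{1}{m}\int\Phi_m\,d\kappa_n+o(1)$; passing to the limit along the subsequence (using upper semi-continuity of $\Phi_m$), then letting first $m\to\infty$ and finally $\varepsilon\to 0$, produces $P^\omega_Z(\pi,T,f)\le\int_Z P_Z^\omega(\pi,T,z,f)\,d\kappa(z)$. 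A single application of Theorem~\ref{thm2.6} to this $\kappa$ then selects a lift $\mu\in\mathcal{M}(X,T)$ with $\psi\mu=\kappa$ realizing the desired upper bound.

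The main obstacle I anticipate is coordinating the interlaced limits $n\to\infty$, $m\to\infty$, $\varepsilon\to 0$, and the weak-$*$ convergence $\kappa_n\to\kappa$ while retaining enough upper-semicontinuity of $z\mapsto\Phi_m(z)$ to push $\int\Phi_m\,d\kappa_n$ to $\int\Phi_m\,d\kappa$; this rests on a careful treatment of the infimum defining $P_Z^\omega(\pi,T,z,f,m,\varepsilon)$ over open covers. A secondary technical nuisance is that the orbital subadditivity is only $\eta$-approximate when near-optimal (rather than exactly optimal) covers are used, so an auxiliary parameter $\eta\to 0$ must be threaded through the entire passage to the limit. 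By contrast, the closing invocation of Theorem~\ref{thm2.6} should be routine, since the existence of a lift $\mu$ with $\psi\mu=\kappa$ attaining the supremum is already built into that statement.
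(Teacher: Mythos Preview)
Your cycle-of-inequalities strategy is correct, and the argument closes exactly as you outline: the first inequality is the elementary exchange $\sup_z\limsup_n\le\lim_n\sup_z$ together with monotonicity in $\varepsilon$; the third is Theorem~\ref{thm2.6} combined with $\int_Z P_Z^\omega(\pi,T,z,f)\,d\kappa\le\sup_z P_Z^\omega(\pi,T,z,f)$; and the middle inequality follows from the subadditivity of $\Phi_n$ (Lemma~\ref{lemm212}), the upper semi-continuity and uniform boundedness of $z\mapsto\Phi_m(z)$ (Proposition~\ref{prop2.9}), and a Krylov--Bogolyubov averaging of near-maximizers $z_n$ to produce $\kappa\in\mathcal{M}(Z,R)$ with $P^\omega_Z(\pi,T,f,\varepsilon)\le\tfrac{1}{m}\int\Phi_m\,d\kappa$ for every $m$, hence $\le\int P_Z^\omega(\pi,T,z,f,\varepsilon)\,d\kappa\le\int P_Z^\omega(\pi,T,z,f)\,d\kappa$, after which Theorem~\ref{thm2.6} finishes the job. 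Your worry about the dependence of $\kappa$ on $\varepsilon$ is harmless: one simply bounds $\int P_Z^\omega(\pi,T,z,f,\varepsilon)\,d\kappa_\varepsilon\le\sup_{\kappa'}\int P_Z^\omega(\pi,T,z,f)\,d\kappa'=P^\omega_{Z,var}(\pi,T,f)$ before letting $\varepsilon\to 0$.

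This is, however, a genuinely different route from the paper's. The paper establishes the hard inequality $P^\omega_Z(\pi,T,f)\le P^\omega_{Z,var}(\pi,T,f)$ \emph{independently of} Theorem~\ref{thm2.6}: it passes to a zero-dimensional principal extension (Proposition~\ref{prop4.10}) and then, working with clopen partitions, builds the optimizing $\mu$ directly by a Tsukamoto--Misiurewicz construction (Proposition~\ref{prop4.11}). Your argument instead treats Theorem~\ref{thm2.6} as a black box and reduces everything to producing a good $\kappa\in\mathcal{M}(Z,R)$ via subadditive/Krylov--Bogolyubov machinery already set up in Section~2. The payoff of your approach is brevity and modularity: once Theorem~\ref{thm2.6} is in hand, no further zero-dimensional extension or explicit measure construction is needed. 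The payoff of the paper's approach is logical independence (Theorem~\ref{thm2.7} is proved before Theorem~\ref{thm2.6}, and both rest on the same constructive core Proposition~\ref{prop4.11}), together with an explicit description of a measure realizing the supremum.
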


By taking $f\equiv 0$, we obtain variational principles for entropy, that is,
\begin{corollary}
      Let $\pi:(X,T)\to (Y,S)$ and $\varphi:(Y,S)\to (Z,R)$ with $\psi=\varphi\circ\pi$ and $\kappa\in \mathcal{M}(Z,R)$. Given $0\leq \omega\leq 1$. \begin{enumerate}
          \item [\rm{(1)}] From Theorem \ref{thm2.6} we have  \begin{align*}
        \int_Zh_Z^\omega(\pi,T,z)d\kappa(z)=\sup\left(\omega h_\mu(T|R)+(1-\omega)h_{\pi\mu}(S|R)\right),
    \end{align*}
    where the supremum is taken over all $\mu\in \mathcal{M}(X,T)$ with $\kappa=\psi\mu$.
    \item [\rm{(2)}] From Theorem \ref{thm2.7} we have
    \begin{align*}
         h^\omega_Z(\pi,T)=\sup_{\mu\in \mathcal{M}(X,T)}\left(\omega h_\mu(T|R)+(1-\omega)h_{\pi\mu}(S|R)\right)=\sup_{z\in Z}h_Z^\omega(\pi,T,z).
    \end{align*}
   
      \end{enumerate}
\end{corollary}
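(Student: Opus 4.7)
The plan is to obtain both parts of the corollary as immediate specializations of the two main variational principles to the test function $f\equiv 0$, invoking the conventions $h_Z^\omega(\pi,T,K,f):=P_Z^\omega(\pi,T,K,0)$ introduced right after \eqref{eq2.0} and $h_Z^\omega(\pi,T):=P_Z^\omega(\pi,T,0)$ from Definition \ref{defn2.1}. In particular, no new dynamical or measure-theoretic ingredient is required; the work already lives in Theorem \ref{thm2.6} and Theorem \ref{thm2.7}.

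For part (1), I would substitute $f\equiv 0$ into the identity of Theorem \ref{thm2.6}. The summand $\omega\int_X f\,d\mu$ on the right-hand side vanishes, while on the left-hand side the fiberwise pressure satisfies $P_Z^\omega(\pi,T,z,0)=h_Z^\omega(\pi,T,z,0)$ by the defining convention. The supremum over $\mu\in\mathcal{M}(X,T)$ with $\psi\mu=\kappa$ therefore collapses to $\sup\bigl(\omega h_\mu(T|R)+(1-\omega)h_{\pi\mu}(S|R)\bigr)$, which is precisely the claimed identity.

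For part (2), I would apply Theorem \ref{thm2.7} with $f\equiv 0$. Once again both $\omega\int_X f\,d\mu$ contributions disappear, and the two formulas of Theorem \ref{thm2.7} specialize to $h^\omega_Z(\pi,T)=\sup_\mu\bigl(\omega h_\mu(T|R)+(1-\omega)h_{\pi\mu}(S|R)\bigr)$ and $\sup_{z\in Z}h_Z^\omega(\pi,T,z,0)=\sup_\mu\bigl(\omega h_\mu(T|R)+(1-\omega)h_{\pi\mu}(S|R)\bigr)$ respectively. Comparing these two equalities immediately gives $\sup_{z\in Z}h_Z^\omega(\pi,T,z,0)=h^\omega_Z(\pi,T)$.

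There is essentially no obstacle here beyond unwinding definitions; the only mildly awkward point is the notational abuse by which the argument $f$ is retained in $h_Z^\omega(\pi,T,z,f)$ even though the entropy quantity does not depend on $f$, but this is inherited verbatim from the pressure notation and causes no difficulty once flagged. The substantive content of the corollary is therefore fully carried by the two main theorems already proved.
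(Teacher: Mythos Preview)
Your proposal is correct and matches the paper's own approach exactly: the paper simply states that the corollary follows ``by taking $f\equiv 0$'' in Theorems \ref{thm2.6} and \ref{thm2.7}, which is precisely what you do. Your remark about the lingering $f$ in the entropy notation is a fair observation about the paper's conventions, not a gap in the argument.
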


\section{Basic properties} In this section, we prove some useful properties.
In \cite{Tsu}, Tsukamoto has established several fundamental properties for $\omega$-weighted topological pressure. We find that the proofs of the relative version are similar, but for completeness, we prove some of them.

\begin{proposition}\label{prop2.2}
    Let $\pi:(X,T)\to (Y,S)$ and $\varphi:(Y,S)\to (Z,R)$ with $\psi=\varphi\circ\pi$. For each $k\in \mathbb{N}$, we have
    \[P^\omega_Z(\pi,T^k,\mathbb{S}_k^Tf)=kP^\omega_Z(\pi,T,f),\]
     and for any $\kappa\in \mathcal{M}(Z,R)$,
     \[\int_ZP^\omega_Z(\pi,T^k,z,\mathbb{S}_k^Tf)d\kappa(z)=k\int_ZP^\omega_Z(\pi,T,z,f)d\kappa(z).\]
\end{proposition}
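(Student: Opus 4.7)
The plan is to mirror the classical power rule $P(T^k,\mathbb{S}_k f)=kP(T,f)$ through both layers of the definition of $P^\omega_Z(\pi,T_1^k,z,\mathbb{S}_k^{T_1}f,n,\varepsilon)$: the inner topological pressure over $T_1$-orbits inside $\pi^{-1}(V)$, and the outer infimum over open covers of $\varphi^{-1}(z)$ in $Y$. Three ingredients drive the argument. (a) The Birkhoff-sum collapse $\mathbb{S}_n^{T_1^k}(\mathbb{S}_k^{T_1}f)=\mathbb{S}_{nk}^{T_1}f$ makes the exponential weights in the two viewpoints agree. (b) By definition of the Bowen metrics, $d^{T_1^k}_n\le d^{T_1}_{nk}$ on $X_1$ and $d'^{T_2^k}_n\le d'^{T_2}_{nk}$ on $X_2$. (c) By uniform continuity of $T_1^j,T_2^j$ for $0\le j<k$, for every $\varepsilon>0$ there exists $\delta=\delta(\varepsilon,k)\in(0,\varepsilon)$ with $d^{T_1^k}_n(x_1,x_2)<\delta\Rightarrow d^{T_1}_{nk}(x_1,x_2)<\varepsilon$ and the analogue for $d'$, with $\delta\to 0$ as $\varepsilon\to 0$.

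Combining (a) and (b), every open cover of $\pi^{-1}(V)$ admissible for $P(T_1,\pi^{-1}(V),f,nk,\varepsilon)$ is admissible for $P(T_1^k,\pi^{-1}(V),\mathbb{S}_k^{T_1}f,n,\varepsilon)$ with identical summands, while (c) gives the reverse after rescaling. This yields
\[
P(T_1^k,\pi^{-1}(V),\mathbb{S}_k^{T_1}f,n,\varepsilon)\le P(T_1,\pi^{-1}(V),f,nk,\varepsilon)\le P(T_1^k,\pi^{-1}(V),\mathbb{S}_k^{T_1}f,n,\delta).
\]
Feeding this sandwich into the outer infimum, and using the corresponding comparison for $d'$ to transport admissible covers of $\varphi^{-1}(z)$ between the two scales, for every $z\in Z$ and $n\in\mathbb{N}$ we obtain
\[
P_Z^\omega(\pi,T_1^k,z,\mathbb{S}_k^{T_1}f,n,\varepsilon)\le P_Z^\omega(\pi,T_1,z,f,nk,\varepsilon)\le P_Z^\omega(\pi,T_1^k,z,\mathbb{S}_k^{T_1}f,n,\delta).
\]
Taking $\log$, dividing by $n$ (so the middle term equals $k$ times division by $nk$), passing to $\sup_{z\in Z}$, letting $n\to\infty$, and finally sending $\varepsilon\to 0$ (hence $\delta\to 0$) collapses the sandwich to $P^\omega_Z(\pi,T_1^k,\mathbb{S}_k^{T_1}f)=kP^\omega_Z(\pi,T_1,f)$.

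For the integrated equality, note that $\kappa\in\mathcal{M}(Z,R)$ is automatically $R^k$-invariant. Apply Theorem \ref{thm2.15} separately to the subadditive families $\{\log P^\omega_Z(\pi,T_1,\cdot,f,n,\varepsilon)\}_n$ (under $R$) and $\{\log P^\omega_Z(\pi,T_1^k,\cdot,\mathbb{S}_k^{T_1}f,n,\varepsilon)\}_n$ (under $R^k$), which converts the pointwise sandwich above into an integrated sandwich against $\kappa$; then let $\varepsilon\to 0$. The main nuisance is purely bookkeeping: producing a single $\delta=\delta(\varepsilon,k)$ that simultaneously controls the comparisons of Bowen metrics on both $X_1$ and $Y$ (taken as the minimum of the two uniform-continuity constants) and verifying $\delta\to 0$ as $\varepsilon\to 0$. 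Once this is arranged, the remainder is the same algebra as the one-factor case in Tsukamoto's paper.
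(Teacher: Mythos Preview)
Your proposal is correct and follows essentially the same route as the paper's proof: both arguments use the Birkhoff-sum identity $\mathbb{S}_n^{T_1^k}(\mathbb{S}_k^{T_1}f)=\mathbb{S}_{nk}^{T_1}f$, the trivial inequality $d^{T_i^k}_n\le d^{T_i}_{nk}$, and a uniform-continuity $\delta=\delta(\varepsilon,k)$ on both levels to produce the sandwich $P_Z^\omega(\pi,T_1^k,z,\mathbb{S}_k^{T_1}f,n,\varepsilon)\le P_Z^\omega(\pi,T_1,z,f,nk,\varepsilon)\le P_Z^\omega(\pi,T_1^k,z,\mathbb{S}_k^{T_1}f,n,\delta)$, then take $\sup_z$ (resp.\ integrate via Theorem~\ref{thm2.15}) and let $\varepsilon\to 0$. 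The only cosmetic difference is that you first record the inner-pressure sandwich before lifting it to the weighted quantity, whereas the paper writes the $P_Z^\omega$ sandwich directly.
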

\begin{proof}
Let $d,d'$ be metrics on $X,$ $Y$, respectively. For any $\varepsilon>0$, there is $0<\delta<\varepsilon$ such that 
\[d^T(x_1,x_2)<\delta\Longrightarrow d_k^T(x_1,x_2)<\varepsilon,\text{ for all }x_1,x_2\in X,\]
\[{d'}^{S}(y_1,y_2)<\delta\Longrightarrow {d'}_k^{S}(y_1,y_2)<\varepsilon, \text{ for all }y_1,y_2\in Y.\]
Then for any $n\in \mathbb{N}$,
\[d^{T^k}_n(x_1,x_2)<\delta\Longrightarrow d^T_{kn}(x_1,x_2)<\varepsilon,\text{ for all }x_1,x_2\in X,\]
\[{d'}^{S^k}_n(y_1,y_2)<\delta\Longrightarrow {d'}_{kn}^S(y_1,y_2)<\varepsilon,\text{ for all }y_1,y_2\in Y.\]

    Let $z\in Z$ and $\mathcal{V}=\{V_1,\cdots, V_p\}\in \mathcal{C}^o_Y(\varphi^{-1}(z))$ with $\diam(V_i,{d'}^{S^k}_n)<\delta$ then $\diam(V_i,{d'}_{kn}^S)<\varepsilon$ for all $i=1,\cdots,p$, and if $\mathcal{U}_i=\{U_1,\cdots,U_{\beta_i}\}\in \mathcal{C}^o_X(\pi^{-1}(V_i))$ with $\diam(U_j,d^{T^k}_n)<\delta$ then $\diam(U_j,d^T_{kn})<\varepsilon$ for each $j=1,\cdots,\beta_i$. Hence,
    \[P^\omega_Z(\pi,T,z,f,kn,\varepsilon)\leq P^\omega_Z(\pi,T^k,z,\mathbb{S}_k^Tf,n,\delta).\]
    Because $\mathbb{S}_n^{T^k}(\mathbb{S}_k^T)=\mathbb{S}_{kn}^T$ and $d_{kn}^T(x_1,x_2)<\varepsilon$ (resp. ${d'}^S_{kn}(y_1,y_2)<\varepsilon$) implies $d^{T^k}_n(x_1,x_2)<\varepsilon$ (resp. ${d'}^{S^k}_n(y_1,y_2)<\varepsilon$), we have
    \[P^\omega_Z(\pi,T^k,z,\mathbb{S}^T_kf,n,\varepsilon)\leq P^\omega_Z(\pi,T,z,f,kn,\varepsilon).\]
    Thus,
    \[P^\omega_Z(\pi,T^k,z,\mathbb{S}^T_kf,n,\varepsilon)\leq P^\omega_Z(\pi,T,z,f,kn,\varepsilon)\leq P^\omega_Z(\pi,T^k,z,\mathbb{S}_k^Tf,n,\delta).\]
    Therefore, 
    \[\sup_{z\in Z}P^\omega_Z(\pi,T^k,z,\mathbb{S}^T_kf,n,\varepsilon)\leq \sup_{z\in Z} P^\omega_Z(\pi,T,z,f,kn,\varepsilon)\leq \sup_{z\in Z}P^\omega_Z(\pi,T^k,z,\mathbb{S}_k^Tf,n,\delta),\]
    and
    \[P^\omega_Z(\pi,T^k,\mathbb{S}^{T}_k,f)=kP^\omega_Z(\pi,T,f).\]

    Let $\kappa\in\mathcal{M}(Z,R)$. By Theorem \ref{thm2.5} we have
    \[\int_ZP^\omega_Z(\pi,T^k,z,\mathbb{S}^T_kf,\varepsilon)d\kappa(z)\leq k\int_ZP^\omega_Z(\pi,T,z,f,\varepsilon)d\kappa(z)\leq \int_ZP^\omega_Z(\pi,T^k,z,\mathbb{S}_k^Tf,\delta)d\kappa(z).\]
    Let $\varepsilon$ and $\delta$ approach to $0$, we have
    \[\int_ZP^\omega_Z(\pi,T^k,z,\mathbb{S}_k^Tf)d\kappa(z)=k\int_ZP^\omega_Z(\pi,T,z,f)d\kappa(z).\]
\end{proof}

The relative weighted topological pressure possesses the following property. For non-relative case, one can see \cite[Lemma 2.3]{Tsu} for details.

\begin{proposition}\label{prop2.3}
   Suppose $(X_i,T_i)$ $(i=1,2,3)$ and $(Z,R)$ are TDSs admitting the following commutative diagram:
    \begin{align}\label{dia2.3}
        \xymatrix@C=60pt{
(X_3,T_3) \ar[r]^-{\pi_2} \ar[rd]_-{\psi_2} & (X_2,T_2) \ar[r]^-{\pi_1} \ar[d]^-{\psi_1} & (X_1,T_1) \ar[ld]^-{\varphi} \\
 & (Z,R) & } 
    \end{align} 
    Then for each $f\in C(X_2)$ and $z\in Z$,
    \[P^\omega_Z(\pi_1,T_2,z,f)\leq P^\omega_Z(\pi_1\circ\pi_2,T_3,z,f\circ\pi_2),\]and
    \[P^\omega_Z(\pi_1,T_2,f)\leq P^\omega_Z(\pi_1\circ\pi_2,T_3,f\circ\pi_2).\]
\end{proposition}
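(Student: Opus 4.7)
The plan is to exploit that both pressures are infima over the \emph{same} class of open covers $\mathcal{V}$ of $\varphi^{-1}(z)$ in the common intermediate space $X_1$, so it suffices to show that, for each $V\in\mathcal{V}$, one can bound $P(T_2,\pi_1^{-1}(V),f,n,\varepsilon)$ in terms of $P(T_3,(\pi_1\circ\pi_2)^{-1}(V),f\circ\pi_2,n,\delta)$ for a suitable $\delta<\varepsilon$. The key idea is to push a near-optimal open cover of $(\pi_1\circ\pi_2)^{-1}(V)\subset X_3$ down to $X_2$ through $\pi_2$.

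Concretely, given $\varepsilon>0$, uniform continuity of $\pi_2$ supplies $\delta\in(0,\varepsilon)$ so that $d^3_n(x,y)<\delta$ forces $d^2_n(\pi_2 x,\pi_2 y)<\varepsilon/3$ for every $n$. Pick any $\mathcal{V}\in\mathcal{C}^o_{X_1}(\varphi^{-1}(z))$ with $d^1_n$-diameter $<\delta$; since $\delta<\varepsilon$, this $\mathcal{V}$ is also admissible on the left-hand side. For each $V\in\mathcal{V}$ and each $U'$ in a near-optimal cover $\mathcal{U}'_V$ of $(\pi_1\circ\pi_2)^{-1}(V)$ in $X_3$ with $d^3_n$-diameter $<\delta$, replace the (not necessarily open) set $\pi_2(U')$ by its Bowen neighbourhood
$$W_{U'}:=\{\,y\in X_2:\,d^2_n(y,\pi_2(U'))<\varepsilon/3\,\},$$
which is open, has $d^2_n$-diameter $<\varepsilon$ by the triangle inequality, and collectively covers $\pi_1^{-1}(V)$ by surjectivity of $\pi_2$. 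To compare Birkhoff sums, the intertwining identity $f\circ T_2^k\circ\pi_2=f\circ\pi_2\circ T_3^k$ yields $\sup_{\pi_2(U')}\mathbb{S}_n^{T_2}f=\sup_{U'}\mathbb{S}_n^{T_3}(f\circ\pi_2)$, and the modulus of continuity $\theta_f$ of $f$ gives $\sup_{W_{U'}}\mathbb{S}_n^{T_2}f\le \sup_{\pi_2(U')}\mathbb{S}_n^{T_2}f+n\,\theta_f(\varepsilon/3)$. Summing exponentials, raising to the $\omega$-th power, summing over $V\in\mathcal{V}$, and taking successive infima over $\mathcal{U}'_V$ and then over $\mathcal{V}$ delivers
$$P^\omega_Z(\pi_1,T_2,z,f,n,\varepsilon)\le e^{\omega n\,\theta_f(\varepsilon/3)}\,P^\omega_Z(\pi_1\circ\pi_2,T_3,z,f\circ\pi_2,n,\delta).$$

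Taking $\tfrac{1}{n}\log$ and $\limsup_{n\to\infty}$---either pointwise at $z$ or after first applying $\sup_{z\in Z}$, using the limits established earlier---and then letting $\varepsilon\to 0$ (so $\delta\to 0$ and $\theta_f(\varepsilon/3)\to 0$) furnishes both asserted inequalities simultaneously. The main obstacle is the thickening step itself: because $\pi_2$ is generally not open, $\pi_2(U')$ need not be open, and forming an open cover of $\pi_1^{-1}(V)$ forces a diameter blow-up by a factor of three and an additive pressure surcharge $n\,\theta_f(\varepsilon/3)$ which is linear in $n$, so it survives the $\tfrac{1}{n}\log$ scaling but still vanishes as $\varepsilon\to 0$.
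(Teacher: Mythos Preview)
Your proof is correct and follows essentially the same route as the paper's: use uniform continuity of $\pi_2$ to push open covers from $X_3$ down to $X_2$ and compare the weighted sums term by term over a common cover $\mathcal{V}$ of $\varphi^{-1}(z)$. The paper simply asserts $P(\pi_2(\Omega),T_2,f,n,\varepsilon)\le P(\Omega,T_3,f\circ\pi_2,n,\delta)$ without addressing the fact that $\pi_2(U)$ need not be open, whereas your Bowen-thickening step $W_{U'}$ makes this rigorous at the cost of the harmless surcharge $\omega\,\theta_f(\varepsilon/3)$ that disappears in the limit.
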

\begin{proof}
    Let $d^i$ be metrics on $X_i$, $i=1,2,3$ and $\varepsilon>0$. For each $n\in\mathbb{N}$, $d^i_n$ is defined as in \eqref{eq1}. There is a $0<\delta<\varepsilon$ such that $d^3(x_1^3,x_2^3)<\delta$ implies that $d^2(\pi_2(x_1^3),\pi_2(x_2^3))<\varepsilon$ for all $x^3_1,x^3_2\in X_3.$ Then for any $n>0$,
    \[d_n^3(x_1^3,x_2^3)<\delta\Longrightarrow d_n^2(\pi_2(x_1^3),\pi_2(x_2^3))<\varepsilon.\]
    Hence, for any $\Omega\subset X_3$, we have
    \[P(\pi_2(\Omega),T_2,f,n,\varepsilon)\leq P(\Omega,T_3,f\circ \pi_2,n,\delta).\]
    Let $\mathcal{V}_1=\{V_1,\cdots,V_p\}\in \mathcal{C}^o_Y(\varphi^{-1}(z))$ with $\diam(V_i,d^1_n)<\delta$. Then for any $i=1,\cdots,p$, we have
    \[P(\pi_1^{-1}(V_i),T_2,f,n,\varepsilon)\leq P(\pi_2^{-1}(\pi_1^{-1}(V_i)),T_3,f\circ \pi_2,n,\delta).\]
    Therefore,
    \[P_Z^\omega(\pi_1,T_2,z,f,n,\varepsilon)\leq P_Z^\omega(\pi_1\circ\pi_2,T_3,z,f\circ\pi_2,n,\delta).\]
    Thus,
    \[P^\omega_Z(\pi_1,T_2,z,f)\leq P^\omega_Z(\pi_1\circ\pi_2,T_3,z,f\circ\pi_2),\] and
    \[P^\omega_Z(\pi_1,T_2,f)\leq P^\omega_Z(\pi_1\circ\pi_2,T_3,f\circ\pi_2),\] which completes the proof.
\end{proof}

The following property is a relative version of Lemma 2.4 in \cite{Tsu}. The proof is nearly the same as in \cite{Tsu}, so we state it without proof and one can see more details in \cite{Tsu}
\begin{proposition}\label{prop2.4}
  Assume that the following solid line commutative diagram exists among the dynamical systems $(X,T)$, $(Y,S)$, $(Y',S')$ and $(Z,R)$:
  \begin{align}\label{dia2.4}
      \xymatrix@C=60pt{
(X,T) \ar[rr]^{\pi} \ar[rd] &  & (Y,S) \ar[ld]^\psi \\
 & (Z,R) & \\
 (X',T')\ar@{-->}[uu]^{\eta} \ar@{-->}[ur] \ar@{-->}[rr]_{\Pi}& & (Y',S')\ar[ul]^{\phi} \ar[uu]_{\xi} }
  \end{align} 
  Then there is a dynamical system $(X',T')$ satisfying the commutative diagram as above such that for each $f\in C(X)$ and $z\in Z$, we have
   \[P^\omega_Z(\pi,T,z,f)\leq P^\omega_Z(\Pi,T',z,f\circ\eta)\text{ and }P^\omega_Z(\pi,T,f)\leq P^\omega_Z(\Pi,T',f\circ\eta).\]
\end{proposition}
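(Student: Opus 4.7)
The natural candidate for $(X', T')$ is the fiber product
\[
X' := \bigl\{(x, y') \in X \times Y' : \pi(x) = \phi(y')\bigr\},\quad T'(x, y') = (Tx, S'y'),
\]
with projections $\eta(x, y') = x$ and $\Pi(x, y') = y'$, and with $X'$ equipped with the maximum of the two coordinate metrics inherited from $X \times Y'$. Well-definedness of $T'$ uses $\pi\circ T = S\circ \pi$ and $\phi \circ S' = S \circ \phi$; continuity of $T', \eta, \Pi$ is immediate; surjectivity of $\eta$ (resp.\ $\Pi$) follows from that of $\phi$ (resp.\ $\pi$), so $\eta$ and $\Pi$ are factor maps. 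By construction $\pi \circ \eta = \phi \circ \Pi$, and composing further with $\psi$ and $\xi$ produces a common factor map $X' \to Z$, so the augmented diagram commutes.

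For the pressure inequality I would proceed in two steps. First, applying Proposition \ref{prop2.3} to the tower $(X',T')\xrightarrow{\eta}(X,T)\xrightarrow{\pi}(Y,S)$ sitting over $(Z,R)$ gives
\[
P^\omega_Z(\pi, T, z, f) \;\le\; P^\omega_Z(\pi \circ \eta, T', z, f \circ \eta) \;=\; P^\omega_Z(\phi \circ \Pi, T', z, f \circ \eta),
\]
where the last equality uses $\pi \circ \eta = \phi \circ \Pi$. Second, I would establish the comparison
\[
P^\omega_Z(\phi \circ \Pi, T', z, f \circ \eta) \;\le\; P^\omega_Z(\Pi, T', z, f \circ \eta),
\]
which compares two pressures on the same system $(X',T')$ and with the same potential $f\circ\eta$ but over two intermediate bases $Y$ and $Y'$ related by the factor map $\phi$. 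For this step, given an open cover $\mathcal{V}' = \{V'_1, \dots, V'_p\}$ of $\xi^{-1}(z)$ with small $d_n^{Y'}$-diameter, I construct a cover of $\psi^{-1}(z) \subset Y$ using that $\phi$ is closed: for each $y \in \psi^{-1}(z)$ the compact fiber $\phi^{-1}(y) \subset \xi^{-1}(z)$ is contained in some finite union $\bigcup_{j\in J(y)} V'_j$, and Lemma \ref{lem2.7} says the set $\{y_0 \in Y : \phi^{-1}(y_0) \subset \bigcup_{j\in J(y)} V'_j\}$ is open in $Y$. A finite subcover furnishes $\mathcal{V}=\{V_1, \dots, V_m\}$, and from $(\phi\circ\Pi)^{-1}(V_l) \subset \bigcup_{j\in J_l}\Pi^{-1}(V'_j)$ together with subadditivity of the classical pressure under finite unions and the concavity estimate $(a+b)^\omega \le a^\omega + b^\omega$ for $\omega \in [0,1]$, one obtains
\[
\sum_l \bigl(P(T',(\phi\circ\Pi)^{-1}(V_l), f\circ\eta, n, \varepsilon)\bigr)^\omega \;\le\; C_n \sum_j \bigl(P(T', \Pi^{-1}(V'_j), f\circ\eta, n, \varepsilon)\bigr)^\omega,
\]
where $C_n$ records the multiplicity with which a given $V'_j$ appears among the $J_l$'s.

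\textbf{Main obstacle.} The principal technical difficulty is the simultaneous control of (i) the $d_n^Y$-diameter of each $V_l$ and (ii) the multiplicity constant $C_n$. For (i), uniform continuity of $\phi$ forces $\phi$-images of $d_n^{Y'}$-small sets to be $d_n^Y$-small, but the closed-map construction $\{y_0 : \phi^{-1}(y_0) \subset \cdots\}$ may enlarge the resulting sets; refining along an auxiliary open cover of $Y$ with small $d_n^Y$-mesh restores the required smallness while only duplicating pressure terms. For (ii), a Lebesgue-number argument applied to $\mathcal{V}'$ on a fixed neighborhood of $\xi^{-1}(z)$ should bound $|J(y)|$ uniformly, forcing $C_n$ to grow at most subexponentially and contribute nothing after dividing by $n$ and letting $n\to\infty$ and $\varepsilon\to0$. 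Assembling the two steps yields the pointwise inequality, and taking $\sup_{z\in Z}$ supplies the uniform version in the statement.
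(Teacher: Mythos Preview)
Your construction of $X'$ as the fiber product is the right one and matches the paper, but you have systematically swapped $\phi$ and $\xi$: in the diagram, $\xi\colon Y'\to Y$ while $\phi\colon Y'\to Z$, so the correct fiber product is $\{(x,y')\in X\times Y':\pi(x)=\xi(y')\}$, the identity you need is $\pi\circ\eta=\xi\circ\Pi$, and the ``closed map'' you invoke is $\xi$, not $\phi$. This is only a labeling slip, but it propagates through the whole argument.

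The substantive issue is your second step. Once you apply Proposition~\ref{prop2.3} and rewrite $\pi\circ\eta=\xi\circ\Pi$, you are left with proving
\[
P^\omega_Z(\xi\circ\Pi,T',z,f\circ\eta)\;\le\;P^\omega_Z(\Pi,T',z,f\circ\eta),
\]
i.e.\ monotonicity of the weighted pressure in the intermediate base. Your proposed route---build a $Y$-cover from a $Y'$-cover via Lemma~\ref{lem2.7} and then refine to recover small $d_n^Y$-diameter---does not close: the refinement by an auxiliary $d_n^Y$-mesh cover of $Y$ multiplies the number of terms by roughly $N(\varepsilon,Y)^n$, which is \emph{exponential} in $n$ and survives after dividing by $n$. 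Likewise, the Lebesgue-number heuristic does not bound $|J(y)|$ uniformly, since the fiber $\xi^{-1}(y)$ may require many $d_n^{Y'}$-small sets to cover, with no a~priori subexponential control as $n$ grows. So both halves of your ``main obstacle'' are genuine obstacles, not merely technicalities.

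The paper sidesteps this completely by working directly on $X$ rather than on $X'$. The key observation is the set identity
\[
\pi^{-1}\bigl(\xi(A)\bigr)\;=\;\eta\bigl(\Pi^{-1}(A)\bigr)\qquad\text{for every }A\subset Y',
\]
together with the fact that the max-metric on $X'$ makes $\eta$ a $1$-Lipschitz projection, so $P(\eta(\Omega),T,f,n,\varepsilon)\le P(\Omega,T',f\circ\eta,n,\varepsilon)$. Given a $Y'$-cover $\{V_j\}$ of $\phi^{-1}(z)$ with small $\widetilde d_n$-diameter, one shrinks to compact $A_j\subset V_j$, pushes down to $\xi(A_j)\subset Y$ (compact, with small $d'_n$-diameter by uniform continuity of $\xi$), and then thickens slightly to open $W_j$. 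The identity above gives $P(\pi^{-1}\xi(A_j),T,f,n,\varepsilon)\le P(\Pi^{-1}(V_j),T',f\circ\eta,n,\delta)$ term by term, so the weighted sums compare with \emph{no multiplicity constant at all}. That one-to-one correspondence between cover elements is exactly what your approach lacks, and it is why the paper does not need Proposition~\ref{prop2.3} here.
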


\section{Zero-dimensional principal extension revisited}

Recall that a factor map $\pi:(X,T)\to (Y,S)$ is said to be \textit{principal} if $h_{top}(T,X|Y)=0$, where $h_{top}(T,X|Y)$ is the conditional topological entropy of $(X,T)$ with respect to $(Y,S)$ defined as \eqref{eqqq2.3}. 
We need the following significant result for principal extension, which is contained in \cite{Dow}.
\begin{theorem}\rm{(\cite[Corollary 6.8.9]{Dow})}\label{thm4.1}
    Let $\pi:(X,T)\to (Y,S)$ be a factor map with $h_{top}(Y,S)<\infty$, $\pi$ is principal if and only if for any $\mu\in \mathcal{M}(X,T)$, we have
    \[h_\mu(T)=h_{\pi\mu}(S).\]
\end{theorem}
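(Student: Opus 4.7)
The plan is to reduce the statement to two already stated ingredients: the Abramov--Rokhlin addition formula $h_\mu(T)=h_{\pi\mu}(S)+h_\mu(T|S)$ valid for every $\mu\in\mathcal{M}(X,T)$, and Theorem \ref{thmm2.5} of Downarowicz--Serafin, which identifies the topological conditional entropy with a variational quantity,
\[
h_{\mathrm{top}}(T,X|Y)=\sup_{\mu\in\mathcal{M}(X,T)}h_\mu(T|S).
\]
Both quantities $h_\mu(T|S)$ are nonnegative, so the supremum above vanishes if and only if each summand does.

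For the direction $(\Rightarrow)$, I would assume $\pi$ is principal, i.e.\ $h_{\mathrm{top}}(T,X|Y)=0$. By Theorem \ref{thmm2.5} this forces $h_\mu(T|S)=0$ for every $\mu\in\mathcal{M}(X,T)$. The hypothesis $h_{\mathrm{top}}(Y,S)<\infty$ ensures $h_{\pi\mu}(S)\le h_{\mathrm{top}}(Y,S)<\infty$, so the Abramov--Rokhlin formula can be rearranged as $h_\mu(T)-h_{\pi\mu}(S)=h_\mu(T|S)=0$, giving the desired equality $h_\mu(T)=h_{\pi\mu}(S)$.

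For the converse $(\Leftarrow)$, I would fix an arbitrary $\mu\in\mathcal{M}(X,T)$; again $h_{\pi\mu}(S)<\infty$ by the hypothesis on $(Y,S)$, so Abramov--Rokhlin yields $h_\mu(T|S)=h_\mu(T)-h_{\pi\mu}(S)=0$. Since this holds for all $\mu$, applying Theorem \ref{thmm2.5} gives $h_{\mathrm{top}}(T,X|Y)=\sup_\mu h_\mu(T|S)=0$, i.e.\ $\pi$ is principal.

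The argument is essentially a one-line reduction, so there is no real obstacle; the only delicate point to emphasize is precisely where the assumption $h_{\mathrm{top}}(Y,S)<\infty$ is used, namely to legitimize subtracting $h_{\pi\mu}(S)$ from $h_\mu(T)$ in the Abramov--Rokhlin identity (without finiteness, the equality $h_\mu(T)=h_{\pi\mu}(S)$ could be a trivial $\infty=\infty$ that does not force $h_\mu(T|S)=0$). If one wished to bypass citing the Abramov--Rokhlin formula directly, one could alternatively invoke the Jacobs theorem $h_\mu(T)=h_{\pi\mu}(S)+\int_Y h_{\mu_y}(T)\,d\pi\mu(y)$ interpretation through disintegration, but the cleanest route is the one above.
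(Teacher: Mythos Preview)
The paper does not actually prove this statement; it is quoted verbatim as \cite[Corollary 6.8.9]{Dow} and used as an ingredient without any argument in the text. So there is no ``paper's own proof'' to compare against.

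Your argument is the standard and correct one: the equivalence follows by combining the Abramov--Rokhlin addition formula (the absolute case of Theorem~\ref{thm3.2} with $Z$ trivial) with the relative variational principle of Theorem~\ref{thmm2.5}. One small refinement: the hypothesis $h_{\mathrm{top}}(Y,S)<\infty$ is really only needed in the direction $(\Leftarrow)$. In the forward direction, once you know $h_\mu(T|S)=0$, the identity $h_\mu(T)=h_{\pi\mu}(S)+h_\mu(T|S)$ gives $h_\mu(T)=h_{\pi\mu}(S)$ directly with no subtraction required, even if both sides are infinite. The finiteness is essential only to rule out the possibility that $h_\mu(T)=h_{\pi\mu}(S)=\infty$ while $h_\mu(T|S)>0$, which is exactly the obstruction you identify.
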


The following property is proved in \cite{Tsu}, we restate it here for a relative version.
\begin{lemma}\rm{(\cite[Lemma 5.3 with $Z=\{*\}$]{Tsu})}\label{lem4.8}
    Suppose the commutative diagram \eqref{dia2.4} holds as in Proposition \ref{prop2.4}
 and
  \[X'=X\times_YY'=\{(x,y)\in X\times Y'|\pi(x)=\xi(y)\}.\]
  If $\xi$ is a principal extension between $(Y',S')$ and $(Y,S)$, then $\eta$ is also a principal extension between $(X,T)$ and $(X\times_YY',T\times S')$.
\end{lemma}

Recall that a compact metric space $X$ is said to be \textit{zero-dimensional} if it has a base consisting of clopen sets. For a topological dynamical system $(X,T)$, the following significant result is proved in \cite[Theorem 3.1]{DH} and contained in \cite[Theorem 7.6.1]{Dow}.
\begin{theorem}\label{thm4.9}
    Let $(X,T)$ be a TDS, there is an extension map $\pi:(X',T')\to(X,T)$ such that 
    \begin{enumerate}
        \item  $\pi:(X',T')\to (X,T)$ is principal;
        \item $X'$ is a zero-dimensional compact metrizable space.
    \end{enumerate}
\end{theorem}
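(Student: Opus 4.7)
The plan is to construct $X'$ as a symbolic extension of $(X,T)$ obtained by coding orbits via a refining sequence of finite Borel partitions with vanishing mesh.

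Fix a compatible metric $d$ on $X$ and choose a refining sequence of finite Borel partitions $\mathcal{P}_1 \preceq \mathcal{P}_2 \preceq \cdots$ of $X$ with $\diam(\mathcal{P}_n,d)\to 0$. Label the atoms of $\mathcal{P}_n$ by a finite alphabet $A_n$ and define a Borel (generally non-continuous) map $\Phi:X\to\Sigma:=\prod_{n\in\mathbb{N}}A_n^{\mathbb{Z}}$ by letting the $(n,k)$-entry of $\Phi(x)$ be the $A_n$-label of the atom of $\mathcal{P}_n$ containing $T^kx$. Writing $\sigma$ for the natural coordinate shift on $\Sigma$, one has $\Phi\circ T=\sigma\circ\Phi$, so the closed shift-invariant set $X':=\overline{\Phi(X)}\subseteq \Sigma$ together with $T':=\sigma|_{X'}$ is a TDS.

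Because the partitions refine each other and their diameters tend to zero, every $\omega=(\omega_{n,k})\in X'$ singles out a unique point $\pi(\omega)\in X$ with $T^k\pi(\omega)\in\overline{\mathcal{P}_n(\omega_{n,k})}$ for all $n,k$; this yields a continuous surjection $\pi:X'\to X$ satisfying $\pi\circ T'=T\circ\pi$ and $\pi\circ\Phi=\mathrm{id}_X$. Each $A_n$ is finite and discrete, so $\Sigma$ is a zero-dimensional compact metrizable space, and its closed subset $X'$ inherits both properties; this settles clause (2).

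The main obstacle is clause (1), namely showing that $\pi$ is principal, i.e.\ $h_{top}(T',X'|X)=0$. Here I would refine the choice of $\{\mathcal{P}_n\}$ so that at level $n$ the atoms have diameter below $2^{-n}$ and their boundaries are \emph{dynamically thin}: the set of points within distance $2^{-n-1}$ of a boundary of $\mathcal{P}_n$ should have asymptotic density along every orbit tending to zero uniformly in $x$ as $n\to\infty$. The fiber $\pi^{-1}(x)$ then consists precisely of the legal symbolic labelings of the orbit of $x$, and a coordinate $(n,k)$ is forced to a single value except when $T^kx$ lies close to a boundary of $\mathcal{P}_n$. Counting $(n,\varepsilon)$-separated codings inside $\pi^{-1}(x)$ thereby reduces to a combinatorial count of such ambiguous positions, and uniform decay of the boundary density gives $h_{top}(T',X'|X)=0$ via \eqref{eqqq2.3}. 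The hard part is producing partitions with the required uniform boundary decay — the combinatorial/measure-theoretic construction of these partitions is the technical heart of the argument, and is carried out in detail in \cite[Theorem 7.6.1]{Dow} and \cite[Theorem 3.1]{DH}.
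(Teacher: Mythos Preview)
The paper does not actually prove this theorem: it is stated without proof and attributed to \cite[Theorem 7.6.1]{Dow} and \cite[Theorem 3.1]{DH}. Your proposal therefore goes beyond what the paper does, by sketching the symbolic-array construction that underlies those references.

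That said, your sketch is not a self-contained proof either. You correctly identify the overall structure---code orbits via a refining sequence of finite Borel partitions, pass to the orbit closure in a zero-dimensional product shift, and reduce the principal property to bounding the number of legal codings of a given orbit---but at the decisive step you require partitions whose boundaries have ``asymptotic density along every orbit tending to zero uniformly in $x$'', and you then defer the construction of such partitions to the very references the theorem is citing. This makes the argument circular. Moreover, that uniform orbit-density condition is a genuinely strong hypothesis that is not available for arbitrary TDSs and is not the mechanism actually used in \cite{DH}; their proof proceeds by an inductive refinement with carefully controlled overlaps (so that the added symbolic information at each stage contributes zero conditional entropy), rather than by postulating a single good sequence of partitions at the outset. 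So while your outline points in the right direction, it does not stand on its own and should be regarded---like the paper's treatment---as an appeal to the cited literature rather than an independent proof.
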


The following theorem, known as the Rohlin-Abramov theorem (see e.g.,  \cite[Lemma 3.1]{Lp}), plays an important role in the proof of Proposition \ref{prop4.10}.
\begin{theorem}\label{thm3.2}
    Let $\pi:(X,T)\to (Y,S)$ and $\varphi:(Y,S)\to (Z,R)$ be two factor maps and $\mu\in\mathcal{M}(X,T)$, then 
    \[h_\mu(T|R)=h_{\mu}(T|S)+h_{\pi\mu}(S|R).\]
\end{theorem}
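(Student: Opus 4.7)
The plan is to prove the identity
\[h_\mu(T|R)=h_\mu(T|S)+h_\nu(S|R),\qquad \nu:=\pi\mu,\]
by combining the chain rule for conditional entropy with a partition-refinement limit on $Y$. Throughout, $\psi^{-1}\mathcal{B}_Z=\pi^{-1}\varphi^{-1}\mathcal{B}_Z\subseteq\pi^{-1}\mathcal{B}_Y$, and both $\psi^{-1}\mathcal{B}_Z$ and $\varphi^{-1}\mathcal{B}_Z$ are invariant under the corresponding actions in the sense that $T^{-1}\psi^{-1}\mathcal{B}_Z\subseteq\psi^{-1}\mathcal{B}_Z$ and $S^{-1}\varphi^{-1}\mathcal{B}_Z\subseteq\varphi^{-1}\mathcal{B}_Z$.

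First, for arbitrary $\mathcal{A}\in\mathcal{P}_X$ and $\mathcal{D}\in\mathcal{P}_Y$, I apply the chain rule $H_\mu(\alpha\vee\beta|\mathcal{F})=H_\mu(\beta|\mathcal{F})+H_\mu(\alpha|\beta\vee\mathcal{F})$ with $\alpha=\mathcal{A}_0^{n-1}$, $\beta=(\pi^{-1}\mathcal{D})_0^{n-1}=\pi^{-1}(\mathcal{D}_0^{n-1})$ and $\mathcal{F}=\psi^{-1}\mathcal{B}_Z$, together with the factor-map identity $H_\mu(\pi^{-1}\mathcal{E}|\pi^{-1}\mathcal{G})=H_\nu(\mathcal{E}|\mathcal{G})$, to obtain
\begin{align*}
H_\mu((\mathcal{A}\vee\pi^{-1}\mathcal{D})_0^{n-1}\,|\,\psi^{-1}\mathcal{B}_Z)
&=H_\nu(\mathcal{D}_0^{n-1}\,|\,\varphi^{-1}\mathcal{B}_Z)\\
&\quad+H_\mu(\mathcal{A}_0^{n-1}\,|\,(\pi^{-1}\mathcal{D})_0^{n-1}\vee\psi^{-1}\mathcal{B}_Z).
\end{align*}
Because the conditioning $\sigma$-algebras are forward-invariant, the left-hand side and the first term on the right are subadditive in $n$. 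Dividing by $n$ and sending $n\to\infty$ gives
\[h_\mu(T,\mathcal{A}\vee\pi^{-1}\mathcal{D}\,|\,Z)=h_\nu(S,\mathcal{D}\,|\,Z)+\alpha_{\mathcal{A},\mathcal{D}},\]
where $\alpha_{\mathcal{A},\mathcal{D}}:=\lim_n n^{-1}H_\mu(\mathcal{A}_0^{n-1}\,|\,(\pi^{-1}\mathcal{D})_0^{n-1}\vee\psi^{-1}\mathcal{B}_Z)$ exists as the difference of two convergent sequences.

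The inequality $h_\mu(T|R)\ge h_\mu(T|S)+h_\nu(S|R)$ now follows quickly. Since $(\pi^{-1}\mathcal{D})_0^{n-1}\vee\psi^{-1}\mathcal{B}_Z\subseteq\pi^{-1}\mathcal{B}_Y$, monotonicity of conditional entropy yields $\alpha_{\mathcal{A},\mathcal{D}}\ge h_\mu(T,\mathcal{A}|Y)$; combined with $h_\mu(T,\mathcal{A}\vee\pi^{-1}\mathcal{D}|Z)\le h_\mu(T|R)$, this gives $h_\nu(S,\mathcal{D}|Z)+h_\mu(T,\mathcal{A}|Y)\le h_\mu(T|R)$, and supping over $\mathcal{A}\in\mathcal{P}_X$ and $\mathcal{D}\in\mathcal{P}_Y$ closes this direction.

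The reverse inequality is the main obstacle. Fix $\mathcal{A}\in\mathcal{P}_X$ and an increasing sequence $\mathcal{D}_k\in\mathcal{P}_Y$ whose generated $\sigma$-algebras increase to $\mathcal{B}_Y$. The decomposition above gives $h_\mu(T,\mathcal{A}|Z)\le h_\nu(S,\mathcal{D}_k|Z)+\alpha_{\mathcal{A},\mathcal{D}_k}$, and $h_\nu(S,\mathcal{D}_k|Z)\uparrow h_\nu(S|R)$ by the sup-definition, so the task reduces to showing $\alpha_{\mathcal{A},\mathcal{D}_k}\to h_\mu(T,\mathcal{A}|Y)$ as $k\to\infty$. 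Disintegrating $\mu$ over $Z$ as $\mu=\int_Z\mu_z\,d\kappa(z)$ and applying the monotone convergence theorem for conditional entropy fiberwise (then integrating by dominated convergence) yields, for each fixed $n$,
\[\tfrac{1}{n}H_\mu(\mathcal{A}_0^{n-1}\,|\,(\pi^{-1}\mathcal{D}_k)_0^{n-1}\vee\psi^{-1}\mathcal{B}_Z)\xrightarrow[k\to\infty]{}\tfrac{1}{n}H_\mu(\mathcal{A}_0^{n-1}\,|\,\pi^{-1}\mathcal{B}_Y),\]
whose right-hand side decreases in $n$ to $h_\mu(T,\mathcal{A}|Y)$. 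The delicate step is to exchange the $k$- and $n$-limits implicit in the definition of $\alpha_{\mathcal{A},\mathcal{D}_k}$; combining the already-established lower bound $\alpha_{\mathcal{A},\mathcal{D}_k}\ge h_\mu(T,\mathcal{A}|Y)$ with the closed-form expression $\alpha_{\mathcal{A},\mathcal{D}_k}=h_\mu(T,\mathcal{A}\vee\pi^{-1}\mathcal{D}_k|Z)-h_\nu(S,\mathcal{D}_k|Z)$, and bounding $h_\mu(T,\mathcal{A}\vee\pi^{-1}\mathcal{D}_k|Z)$ from above via the chain-rule identity at finite $n$ followed by the monotone convergence above, squeezes $\alpha_{\mathcal{A},\mathcal{D}_k}\to h_\mu(T,\mathcal{A}|Y)$. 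Taking sup over $\mathcal{A}\in\mathcal{P}_X$ then delivers $h_\mu(T|R)\le h_\mu(T|S)+h_\nu(S|R)$ and completes the proof.
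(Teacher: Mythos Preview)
The paper does not actually prove this statement; Theorem~\ref{thm3.2} is stated with a citation to \cite[Lemma 3.1]{Lp} and is then used as a black box in the proof of Proposition~\ref{prop4.10}. So there is no in-paper argument to compare against.

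Your argument is essentially correct and follows the standard route to the relative Abramov--Rokhlin formula. The inequality $h_\mu(T|R)\ge h_\mu(T|S)+h_\nu(S|R)$ is handled cleanly. For the reverse inequality, your ``squeeze'' can be made precise more simply than you indicate: by subadditivity, $\alpha_{\mathcal{A},\mathcal{D}_k}=\inf_n n^{-1}H_\mu(\mathcal{A}_0^{n-1}\mid(\pi^{-1}\mathcal{D}_k)_0^{n-1}\vee\psi^{-1}\mathcal{B}_Z)$, so for each fixed $n$ one has $\alpha_{\mathcal{A},\mathcal{D}_k}\le n^{-1}H_\mu(\mathcal{A}_0^{n-1}\mid(\pi^{-1}\mathcal{D}_k)_0^{n-1}\vee\psi^{-1}\mathcal{B}_Z)$; sending first $k\to\infty$ (martingale convergence along the increasing $\sigma$-algebras, whose join is $\pi^{-1}\mathcal{B}_Y$) and then $n\to\infty$ gives $\limsup_k\alpha_{\mathcal{A},\mathcal{D}_k}\le h_\mu(T,\mathcal{A}|Y)$, which together with the lower bound you already noted yields $\alpha_{\mathcal{A},\mathcal{D}_k}\to h_\mu(T,\mathcal{A}|Y)$. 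Two small points: the claim $h_\nu(S,\mathcal{D}_k|Z)\uparrow h_\nu(S|R)$ is the relative Kolmogorov--Sinai theorem for generating sequences rather than ``the sup-definition''; and the existence of an increasing sequence $(\mathcal{D}_k)$ generating $\mathcal{B}_Y$ uses that $Y$ is a compact metric space. Neither is a gap, but both are worth naming.
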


\begin{remark}
    Let $\pi,\varphi$ be factor maps as above. If $\pi$ is a principal extension between $(X,T)$ and $(Y,S)$, then by Theorem \ref{thm4.1},
    \begin{align*}
        h_\mu(T|R)&=h_{\mu}(T|S)+h_{\pi\mu}(S|R)\\&=h_{\mu}(T)-h_{\pi\mu}(S)+h_{\pi\mu}(S|R)=h_{\pi\mu}(S|R).
    \end{align*}
    
\end{remark}

 We now state a key property for the relative weighted topological pressure as follows. For convenience, we first put \[P^\omega_{Z,var}(\pi,T,f)=\sup_{\mu\in \mathcal{M}(X,T)}\left(\omega h_\mu(T|R)+(1-\omega)h_{\pi\mu}(S|R)+\omega\int_Xfd\mu\right).\]

\begin{proposition}\rm{(\cite[Corollary 5.5 with $Z=\{*\}$]{Tsu})}\label{prop4.10}
    Let $\pi:(X,T)\to (Y,S)$ and $\varphi:(Y,S)\to (Z,R)$ with $\psi=\varphi\circ\pi$ and $f\in C(X)$. There is a commutative diagram satisfying
    \begin{align}
      \xymatrix@C=60pt{
(X,T) \ar[rr]^{\pi} \ar[rd]_\psi &  & (Y,S) \ar[ld]^\varphi \\
 & (Z,R) & \\
 (X\times_YY',T\times S')\ar[uu]^{\eta} \ar@{-->}[ur] \ar[rr]_{\Pi}& & (Y',S')\ar@{-->}[ul]^{\phi} \ar[uu]_{\xi} \\ (X',T')\ar[u]^\rho \ar[urr]_{\Pi'}}
  \end{align} 
  \begin{enumerate}
      \item  The factor maps $\eta$, $\rho$ (hence $\eta\circ \rho$) and $\xi$ are principal extensions. Besides, $X'$ and $Y'$ are zero-dimensional.
      \item  For any $0\leq \omega\leq 1$ and $z\in Z$, we have
      \[P^\omega_Z(\pi,T,z,f)\leq P^\omega_Z(\Pi',T',z,f\circ\eta\circ\rho)\] and \[P^\omega_Z(\pi,T,f)\leq P^\omega_Z(\Pi',T',f\circ\eta\circ\rho).\] Moreover,
      \[P^\omega_{Z,var}(\Pi',T',f\circ\eta\circ\rho)\leq P^\omega_{Z,var}(\pi,T,f).\]
  \end{enumerate}
\end{proposition}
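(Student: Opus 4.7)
The plan is to build the diagram in two steps, invoking Theorem \ref{thm4.9} twice, and then to transport the two pressure inequalities via Propositions \ref{prop2.4} and \ref{prop2.3}, reducing the variational inequality to the Rohlin--Abramov type identity of Theorem \ref{thm3.2} combined with principality.

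First I would apply Theorem \ref{thm4.9} to $(Y,S)$ to get a principal extension $\xi\colon(Y',S')\to (Y,S)$ with $Y'$ zero-dimensional, then form the fibered product $(X\times_Y Y',T\times S')$ with projections $\eta$ onto $X$ and $\Pi$ onto $Y'$; this is exactly the setup of Proposition \ref{prop2.4}, and by Lemma \ref{lem4.8} the extension $\eta$ is principal. Now I would apply Theorem \ref{thm4.9} once more to $(X\times_Y Y',T\times S')$ to obtain a principal extension $\rho\colon(X',T')\to (X\times_YY',T\times S')$ with $X'$ zero-dimensional, and set $\Pi':=\Pi\circ\rho$. Since a composition of principal extensions is principal, $\eta\circ\rho$ is principal, proving item (1). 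For the first pair of pressure inequalities in (2), Proposition \ref{prop2.4} applied to the dashed portion of the diagram gives
\[
P^\omega_Z(\pi,T,z,f)\le P^\omega_Z(\Pi,T\times S',z,f\circ\eta),\qquad P^\omega_Z(\pi,T,f)\le P^\omega_Z(\Pi,T\times S',f\circ\eta),
\]
and Proposition \ref{prop2.3} applied to the factorization $\Pi'=\Pi\circ\rho$ with $f\circ\eta$ in place of $f$ yields
\[
P^\omega_Z(\Pi,T\times S',z,f\circ\eta)\le P^\omega_Z(\Pi',T',z,f\circ\eta\circ\rho),
\]
and similarly for the $\sup_z$ version; chaining these two inequalities gives the assertions.

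For the variational inequality I would fix an arbitrary $\mu'\in\mathcal{M}(X',T')$ and push it forward along $\eta\circ\rho$ to obtain $\mu=(\eta\circ\rho)\mu'\in\mathcal{M}(X,T)$. Since $\eta\circ\rho\colon(X',T')\to(X,T)$ is principal, Theorem \ref{thm3.2} applied to the chain $(X',T')\to(X,T)\to(Z,R)$ gives
\[
h_{\mu'}(T'|R)=h_{\mu'}(T'|T)+h_\mu(T|R)=h_\mu(T|R).
\]
Similarly, the principality of $\xi$ combined with commutativity of the diagram (so that $\xi\circ\Pi'=\pi\circ\eta\circ\rho$, hence $\xi(\Pi'\mu')=\pi\mu$) and Theorem \ref{thm3.2} applied to the chain $(Y',S')\to(Y,S)\to(Z,R)$ give
\[
h_{\Pi'\mu'}(S'|R)=h_{\xi(\Pi'\mu')}(S|R)=h_{\pi\mu}(S|R).
\]
Finally $\int_{X'}(f\circ\eta\circ\rho)\,d\mu'=\int_X f\,d\mu$ by the change-of-variables formula, so the whole expression defining $P^\omega_{Z,var}$ at $\mu'$ equals that at $\mu$, and taking $\sup_{\mu'}$ yields $P^\omega_{Z,var}(\Pi',T',f\circ\eta\circ\rho)\le P^\omega_{Z,var}(\pi,T,f)$.

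The main obstacle is the relative-entropy step in the last paragraph: principality is originally stated as equality of absolute measure-theoretic entropies, so one must carefully invoke the Rohlin--Abramov decomposition (Theorem \ref{thm3.2}) at two different levels to upgrade $h_{\mu'}(T')=h_\mu(T)$ and $h_{\Pi'\mu'}(S')=h_{\xi\Pi'\mu'}(S)$ into the corresponding equalities of relative entropies over $(Z,R)$. Once this is in place, the construction of the diagram and the pressure inequalities are essentially a routine assembly of the properties already proved.
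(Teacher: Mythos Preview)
Your proposal is correct and follows essentially the same approach as the paper's own proof: two applications of Theorem \ref{thm4.9}, Proposition \ref{prop2.4} and Proposition \ref{prop2.3} for the pressure inequalities, Lemma \ref{lem4.8} for principality of $\eta$, and the Rohlin--Abramov identity (Theorem \ref{thm3.2}) to convert principality into equality of the relative entropies over $(Z,R)$. The only cosmetic difference is that the paper establishes the variational inequality in two separate stages (first $P^\omega_{Z,var}(\Pi,T\times S',f\circ\eta)\le P^\omega_{Z,var}(\pi,T,f)$, then $P^\omega_{Z,var}(\Pi',T',f\circ\eta\circ\rho)\le P^\omega_{Z,var}(\Pi,T\times S',f\circ\eta)$), whereas you collapse them into a single step using principality of $\eta\circ\rho$; both routes are equivalent.
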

\begin{proof}
    By Theorem \ref{thm4.9} there is a zero-dimensional principal extension $\xi:(Y',S')\to (Y,S)$. Let $(X\times_Y Y',T\times S')$ be the joining of $(X,T)$ and $(Y',S')$ over $(Y,S)$ and let $\eta:X\times_YY'\to X$ and $\Pi:X\times_YY'\to Y'$ be the projections. By Proposition \ref{prop2.4}, for any $z\in Z$ it holds that
    \[P^\omega_Z(\pi,T,z,f)\leq P^\omega_Z(\Pi,T\times S',z,f\circ\eta),\]
    and 
    \[P^\omega_Z(\pi,T,f)\leq P^\omega_Z(\Pi,T\times S',f\circ\eta).\]

    From the commutative diagram, for any $\mu\in \mathcal{M}(X\times_YY',T\times S')$ it holds that 
    \begin{align}\label{2.9}
        h_{\pi\eta\mu}(S|R)=h_{\xi\Pi\mu}(S|R),
    \end{align}
and by Rohlin-Abramov Theorem, we have 
\begin{align}\label{2.10}
    h_\mu(T\times S'|R)=h_\mu(T\times S'|T)+h_{\eta\mu}(T|R)
\end{align}
and
\begin{align}\label{2.11}
    h_{\Pi\mu}(S'|R)=h_{\Pi\mu}(S'|S)+h_{\xi\Pi\mu}(S|R)
\end{align}
    
  By Lemma \ref{lem4.8} $\eta$ is principal and as $\xi$ is principal, \eqref{2.9}, \eqref{2.10} and \eqref{2.11} imply that 
    \begin{align}\label{eqqq4.6}
        h_\mu(T\times S'|R)=h_{\eta\mu}(T|R) \text{ and }h_{\Pi\mu}(S'|R)=h_{\xi\Pi\mu}(S|R)=h_{\pi\eta\mu}(S|R). 
    \end{align}
    Therefore, from \eqref{eqqq4.6},
    \begin{align*}
        P^\omega_{Z,var}&(\Pi,T\times S',f\circ\eta)
        \leq P^\omega_{Z,var}(\pi,T,f).
    \end{align*}
    Using Theorem \ref{thm4.9} again, there is a zero-dimensional principal extension $\rho:(X',T')\to (X\times_YY',T\times S')$ as above. 
    By Proposition \ref{prop2.3}, we obtain
    \[P^\omega_Z(\Pi,T\times S',z,f\circ\eta)\leq P^\omega_Z(\Pi',T',z,f\circ\eta\circ\rho)\] and
    \[P_Z^\omega(\Pi,T\times S',f\circ \eta)\leq P_Z^\omega(\Pi',T',f\circ \eta\circ\rho).\]
    Using Rohlin-Abramov Theorem again, since $\rho$ is principal, we have
    \[P^\omega_{Z,var}(\Pi',T',f\circ\eta\circ\rho)\leq P^\omega_{Z,var}(\Pi,T\times S',f\circ\eta).\]
   Hence, 
    \[P^\omega_Z(\pi,T,z,f)\leq P_Z^\omega(\Pi',T',z,f\circ \eta\circ\rho),\]
    and
    \[P^\omega_Z(\pi,T,f)\leq P_Z^\omega(\Pi',T',f\circ \eta\circ\rho).\]
    Moreover,
    \[P^\omega_{Z,var}(\Pi',T',f\circ\eta\circ\rho)\leq P^\omega_{Z,var}(\pi,T,f).\]
\end{proof}
\section{Variational principles}
\subsection{Proof of one side of variational principles}
In this subsection, we prove that the weighted topological pressure is larger than the weighted measure-theoretic one.
\begin{lemma}\rm{(\cite[Lemma 9.9]{Pw})}\label{lem3.3}
    Let $c_i\in \mathbb{R}$ and $p_i\geq0$, $i=1,\cdots,m,$ with $\sum_{i=1}^mp_i=1$. Then we have
    \[\sum_{i=1}^mp_i(c_i-\log p_i)\leq\log\sum_{i=1}^me^{c_i}.\]
\end{lemma}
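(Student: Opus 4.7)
The plan is to recognize this as the classical Gibbs-type inequality, which follows from Jensen's inequality applied to the concave function $\log$, together with the convention $0\log 0 = 0$. I would not try anything fancy; the standard one-line argument is optimal here.

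First I would reduce to the case where every $p_i$ is strictly positive. If some $p_i=0$, the term $p_i(c_i-\log p_i)$ vanishes by convention, and the inequality for the full index set follows from the inequality restricted to $\{i: p_i>0\}$ together with the trivial bound $\sum_{i:p_i>0}e^{c_i}\le\sum_{i=1}^m e^{c_i}$. Note however that equality in the full statement forces $p_i>0$ for all $i$, since otherwise dropping a term strictly decreases $\sum e^{c_i}$. This handles the degenerate cases cleanly and lets me assume $p_i>0$ throughout the main argument.

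With $p_i>0$ for all $i$, I would set $a_i = e^{c_i}/p_i$ and apply Jensen's inequality to the concave function $\log$ with weights $p_i$:
\[
\sum_{i=1}^m p_i\log a_i \;\le\; \log\!\left(\sum_{i=1}^m p_i a_i\right).
\]
The left-hand side is $\sum_{i=1}^m p_i(c_i - \log p_i)$, and the right-hand side simplifies to $\log\sum_{i=1}^m e^{c_i}$, which is exactly the desired inequality.

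For the equality case, Jensen's inequality for the strictly concave function $\log$ becomes an equality if and only if the values $a_i$ are all equal on the support of the weights, i.e., $e^{c_i}/p_i = \lambda$ is constant in $i$. Combined with $\sum_i p_i = 1$, the constant must be $\lambda = \sum_{j=1}^m e^{c_j}$, yielding $p_i = e^{c_i}/\sum_{j=1}^m e^{c_j}$ for each $i$. No step here presents a real obstacle; the only thing to be careful about is the bookkeeping around zero probabilities and the strict concavity of $\log$ in the equality discussion.
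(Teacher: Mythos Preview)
Your argument is correct and is the standard Jensen/concavity proof of this Gibbs-type inequality. The paper itself does not prove this lemma at all; it simply quotes it from Walters' book \cite[Lemma 9.9]{Pw}, so there is no in-paper proof to compare against, and your write-up (which is essentially Walters' own argument) would serve perfectly well as a supplied proof.
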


\begin{proposition}  
\label{prop4.1}
 Let $\pi:(X,T)\to (Y,S)$ and $\varphi:(Y,S)\to (Z,R)$ with $\psi=\varphi\circ\pi$ and $f\in C(X)$.  For any $0\leq \omega\leq 1$ and $\mu\in \mathcal{M}(X,T),$ the half of the variational principles hold:
    \begin{enumerate}
        \item [\rm{(1)}] We have
        \[\omega h_\mu(T|R)+(1-\omega)h_{\pi\mu}(S|R)+\omega\int_Xfd\mu\leq P^\omega_{Z}(\pi,T,f).\]
    \item [\rm{(2)}]
    If $\kappa=\psi\mu\in\mathcal{M}(Z,R)$, then 
    \[\omega h_\mu(T|R)+(1-\omega)h_{\pi\mu}(S|R)+\omega\int_Xfd\mu\leq\int_ZP_Z^\omega(\pi,T,z,f)d\kappa(z).\] 
    \item [\rm{(3)}] Therefore,  
    \[\omega h_\mu(T|R)+(1-\omega)h_{\pi\mu}(S|R)+\omega\int_Xfd\mu\leq\sup_{z\in Z}P_Z^\omega(\pi,T,z,f).\]
    \end{enumerate}
\end{proposition}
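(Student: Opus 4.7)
The plan is to establish part (2) first; parts (1) and (3) will follow by cheap manipulations. Indeed, (3) is immediate from $\int_Z \cdot \, d\kappa \le \sup_{z\in Z}$, and for (1) I would observe that
\begin{equation*}
\limsup_n \tfrac{1}{n}\log P_Z^\omega(\pi,T,z,f,n,\varepsilon) \;\le\; \lim_n \tfrac{1}{n}\sup_{z\in Z}\log P_Z^\omega(\pi,T,z,f,n,\varepsilon) \;=\; P_Z^\omega(\pi,T,f,\varepsilon),
\end{equation*}
so after taking $\sup_z$ and letting $\varepsilon \to 0$ one gets $\sup_z P_Z^\omega(\pi,z,f) \le P_Z^\omega(\pi,T,f)$. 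The core of (2) is Tsukamoto's idea of applying the log-sum inequality (Lemma \ref{lem3.3}) \emph{twice} --- once on each level of the tower $X \to Y \to Z$ --- which is exactly what conjures the weighted combination $\omega h_\mu(T|R) + (1-\omega) h_{\pi\mu}(S|R)$.

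Set $\nu = \pi\mu$, $\kappa = \psi\mu$, and disintegrate $\mu = \int_Z \mu_z \, d\kappa(z)$ (so $\nu = \int_Z \pi\mu_z \, d\kappa(z)$). I would fix $\varepsilon > 0$ and finite Borel partitions $\alpha$ of $X$ and $\beta$ of $Y$, each of diameter below $\varepsilon$ and satisfying $\pi^{-1}\beta \preceq \alpha$. For $A \in \alpha_0^{n-1}$ write $B(A) \in \beta_0^{n-1}$ for the unique element with $A \subset \pi^{-1}B(A)$. For $\kappa$-a.e.\ $z$, setting $p_A := \mu_z(A)$, $q_B := \pi\mu_z(B) = \sum_{A\subset \pi^{-1}B} p_A$, and $\bar{g}_A := p_A^{-1}\int_A \mathbb{S}_n f \, d\mu_z$, a first application of Lemma \ref{lem3.3} inside each fiber $B$, multiplied by $\omega q_B$ and summed over $B$, together with the identity $-\sum_B q_B \log q_B = H_{\pi\mu_z}(\beta_0^{n-1})$, produces
\begin{equation*}
\omega H_{\mu_z}(\alpha_0^{n-1}) + (1-\omega) H_{\pi\mu_z}(\beta_0^{n-1}) + \omega\!\int\! \mathbb{S}_n f \, d\mu_z \;\leq\; \sum_B q_B \Bigl[\log \Bigl(\sum_{A \subset \pi^{-1}B} e^{\bar{g}_A}\Bigr)^{\omega} - \log q_B\Bigr].
\end{equation*}
A second application of Lemma \ref{lem3.3} with $c_B := \omega \log \sum_{A\subset \pi^{-1}B} e^{\bar{g}_A}$ then bounds the right-hand side by $\log \sum_B \bigl(\sum_{A \subset \pi^{-1}B} e^{\bar{g}_A}\bigr)^\omega$, which already has the structural shape of $\log P_Z^\omega(\pi,T,z,f,n,\varepsilon)$.

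To upgrade this partition-level estimate to the open-cover pressure, I would approximate each $A$ and each $B$ by an inner compact set and a slight Bowen-small open thickening (the usual Misiurewicz/Katok construction), preserving the hierarchy $A \subset \pi^{-1}B$ and paying only a bounded multiplicity per symbol; replacing $\bar{g}_A$ with $\sup_A \mathbb{S}_n f$ in the exponentials then yields, for each $z$,
\begin{equation*}
\omega H_{\mu_z}(\alpha_0^{n-1}) + (1-\omega) H_{\pi\mu_z}(\beta_0^{n-1}) + \omega\!\int\! \mathbb{S}_n f \, d\mu_z \;\leq\; \log P_Z^\omega(\pi,T,z,f,n,\varepsilon) + o(n).
\end{equation*}
Integrating against $\kappa$ (using $H_\mu(\alpha_0^{n-1} | \psi^{-1}\mathcal{B}_Z) = \int H_{\mu_z}(\alpha_0^{n-1}) \, d\kappa(z)$ and the corresponding identity on $Y$), dividing by $n$ and letting $n \to \infty$ (Theorem \ref{thm2.15} on the right), then taking $\sup$ over $\alpha,\beta$ and $\varepsilon \to 0$, delivers (2).

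The hard part will be the geometric bookkeeping in the open-cover upgrade: the two diameter conditions (the $d_n$-smallness of the $X$-opens covering $\pi^{-1}V$ and the $d'_n$-smallness of the $Y$-opens covering $\varphi^{-1}(z)$) must be imposed simultaneously without spoiling the hierarchy $A \subset \pi^{-1}B$, and the multiplicity overcount from the Bowen thickening has to be shown to be absorbed into $o(n)$ even after being raised to the power $\omega$. Apart from this careful bookkeeping, the argument is a faithful lift of Tsukamoto's $Z = \{*\}$ calculation to the relative setting.
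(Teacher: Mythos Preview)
Your overall strategy is the same as the paper's (and Tsukamoto's): disintegrate over $Z$, apply Lemma~\ref{lem3.3} twice to get the two-level estimate
\[
\omega H_{\mu_z}(\alpha_0^{n-1})+(1-\omega)H_{\pi\mu_z}(\beta_0^{n-1})+\omega\!\int\!\mathbb{S}_n f\,d\mu_z
\;\le\;\log\sum_B\Bigl(\sum_{A\subset\pi^{-1}B}e^{\sup_A\mathbb{S}_nf}\Bigr)^\omega,
\]
and then compare the right-hand side with $\log P_Z^\omega(\pi,T,z,f,n,\varepsilon)$ via compact approximation of the partition cells. The double log-sum computation you wrote is correct.

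The gap is your claim that the comparison error is $o(n)$. It is not: the Misiurewicz compact approximation produces, at each time step, one ``leftover'' piece in $Y$ and one in each $\pi^{-1}C_i$, so a $d_n$-small open set meets at most $2^n$ cells of $\mathcal{C}^n$ (respectively of $\mathcal{D}^n_C$), exactly as in Claim~\ref{cl4.5}. After raising to the power $\omega$ and summing, this contributes $(1+\omega)n\log 2$, and the compact approximation itself contributes a further additive constant via $H_\mu(\alpha\mid\alpha')$. The total error after dividing by $n$ is thus a \emph{fixed positive constant} (the paper gets $2+2\log 2$), not $o(1)$; no amount of careful bookkeeping will make this vanish.

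What is missing from your outline is the \emph{amplification trick}: having established
\[
\omega h_\mu(T|R)+(1-\omega)h_{\pi\mu}(S|R)+\omega\!\int\! f\,d\mu\;\le\;\int_Z P_Z^\omega(\pi,T,z,f)\,d\kappa(z)+C
\]
with a universal constant $C$, apply the same argument to $(T^k,S^k,R^k,\mathbb{S}_k^Tf)$, use Proposition~\ref{prop2.2} to rewrite both sides as $k$ times the original quantities, divide by $k$, and let $k\to\infty$. With this addition your proof is complete and coincides with the paper's.
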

\begin{proof}
      We use a similar approach as in \cite{Tsu} by applying \textit{amplification trick}, that is, we shall prove that there are constants $C_0, C>0$ such that for any $k\in \mathbb{N}$,
    \[\omega h_\mu(T^k|R^k)+(1-\omega)h_{\pi\mu}(S^k|R^k)+\omega\int_X\mathbb{S}^T_kfd\mu\leq P^\omega_{Z}(\pi,T^k,\mathbb{S}_k^Tf)+C\] and for $\kappa=\psi\mu,$
   \[\omega h_\mu(T^k|R^k)+(1-\omega)h_{\pi\mu}(S^k|R^k)+\omega\int_X\mathbb{S}_k^Tfd\mu\leq\int_ZP_Z^\omega(\pi,T^k,z,\mathbb{S}_k^Tf)d\kappa(z)+C_0.\]
Since $h_{\{\cdot\}}(\cdot^k|\cdot^k)=kh_{\{\cdot\}}(\cdot|\cdot)$ and $\int_X\mathbb{S}_k^Tfd\mu=k\int_Xfd\mu$,
    then by Proposition \ref{prop2.2}, we have
    \[\omega h_\mu(T|R)+(1-\omega)h_{\pi\mu}(S|R)+\omega\int_Xfd\mu\leq P^\omega_{Z}(\pi,T,f)+\frac{C}{k}\]and
    \[\omega h_\mu(T|R)+(1-\omega)h_{\pi\mu}(S|R)+\omega\int_Xfd\mu\leq \int_ZP_Z^\omega(\pi,T,z,f)d\kappa(z)+\frac{C_0}{k}\]
    for all $k\in \mathbb{N}$.
    
(1) and (2) For each $\mathcal{A}\in \mathcal{P}_Y$ and $\mu\in \mathcal{M}(X,T)$, we write $\mathcal{A}^n=\bigvee_{i=0}^{n-1}S^{-i}\mathcal{A}$ and $\nu=\pi\mu$, $\kappa=\psi\mu.$

   At first, for any $\mathcal{A}=\{A_1,\cdots,A_\alpha\}\in \mathcal{P}_Y$ and $\mathcal{B}\in \mathcal{P}_X$  we will prove that 
     \[\omega h_\mu(T,\mathcal{B}|Z)+(1-\omega)h_{\pi\mu}(S,\mathcal{A}|Z)+\omega\int_Xfd\mu\leq P^\omega_{Z}(\pi,T,f)+C.\]
     
For each $1\leq i\leq\alpha$ we choose a compact subset $C_i\subset A_i$ such that 
 \begin{align}\label{eq3.1}
     \sum_{a=1}^\alpha\nu(A_i\backslash C_i)<\frac{1}{\log\alpha},
 \end{align}
 and set $C_0=Y\backslash (C_1\cup\cdots\cup C_\alpha)$ and $\mathcal{C}=\{C_0,C_1,\cdots,C_\alpha\}.$

 Let $\mathcal{B}\vee\pi^{-1}(\mathcal{C}).$ Suppose that it has the following form
 \[\mathcal{B}\vee\pi^{-1}(\mathcal{C})=\{B_{ij}|0\leq i\leq \alpha, 1\leq j\leq \beta_i\},~\pi^{-1}(C_i)=\bigcup_{j=1}^{\beta_i}B_{ij}~(0\leq i\leq \alpha).\]
For each $B_{ij}$ $(0\leq i\leq \alpha, 1\leq j\leq \beta_i)$, we take a compact subset $D_{ij}\subset B_{ij}$ such that 
\begin{align}\label{eqq2.2}
    \sum_{i=0}^\alpha\log\beta_i\left(\sum_{j=1}^{\beta_i}\mu(B_{ij}\backslash D_{ij})\right)<1.
\end{align}
We set 
\[D_{i0}=\pi^{-1}(C_i)\backslash\bigcup_{j=1}^{\beta_i}D_{ij}~(0\leq i\leq \alpha)\]
and define
\[\mathcal{D}=\{D_{ij}|0\leq i\leq \alpha,0\leq j\leq \beta_i\}.\]

   \begin{claim}\label{cl4.2}
      For each $n\in \mathbb{N}$, we have 
      \[H_\nu(\mathcal{A}^n|Z)\leq H_\nu(\mathcal{C}^n|Z)+nH_\nu(\mathcal{A}|\mathcal{C}).\]
      Hence,
      \[h_\nu(S,\mathcal{A}|Z)\leq h_\nu(S,\mathcal{C}|Z)+1.\]
    \end{claim}
    \begin{proof}
    \begin{align*}
    H_\nu(\mathcal{A}^n|Z)&\leq H_\nu(\mathcal{A}^n\vee\mathcal{C}^n|Z)= H_\nu(\mathcal{C}^n|Z)+H_\nu(\mathcal{A}^n|\mathcal{C}^n\vee\varphi^{-1}(\mathcal{B}_Z))\\&\leq H_\nu(\mathcal{C}^n|Z)+H_\nu(\mathcal{A}^n|\mathcal{C}^n)\\&\leq H_\nu(\mathcal{C}^n|Z)+nH_\nu(\mathcal{A}|\mathcal{C}).
\end{align*}
   Since $C_i\subset A_i$ for $1\leq i\leq \alpha$,
\[H_\nu(\mathcal{A}|\mathcal{C})=\nu(C_0)\sum_{i=1}^\alpha\left(-\frac{\nu(A_i\cap C_0)}{\nu(C_0)}\log\frac{\nu(A_i\cap C_0)}{\nu(C_0)}\right)\leq \nu(C_0)\log\alpha.\]
Thus,
\[h_\nu(S,\mathcal{A}|Z)\leq h_\nu(S,\mathcal{C}|Z)+1.~(\text{ by }\eqref{eq3.1})\]
    \end{proof}
 \begin{claim}\label{cl4.3}
  For each $n\in \mathbb{N}$, we have
  \[H_\mu(\mathcal{B}^n|Z)\leq H_\mu(\mathcal{D}^n|Z)+nH_\mu(\mathcal{B}\vee\pi^{-1}(\mathcal{C})|\mathcal{D}).\]
  Hence,
  \[h_\mu(T,\mathcal{B}|Z)\leq h_\mu(T,\mathcal{D}|Z)+1.\]
       \end{claim}
       \begin{proof}
           It is obvious that $\pi^{-1}(\mathcal{C}^{n})\preceq\mathcal{D}^{n}$ and it holds that
           \begin{align*}
               H_\mu(\mathcal{B}^n|Z)&\leq H_{\mu}((\mathcal{B}\vee\pi^{-1}(\mathcal{C}))^n\vee\mathcal{D}^n|Z)
               \\&\leq H_{\mu}(\mathcal{D}^{n}|Z)+H_{\mu}((\mathcal{B}\vee\pi^{-1}(\mathcal{C}))^n|\mathcal{D}^{n})\\&\leq H_{\mu}(\mathcal{D}^{n}|Z)+nH_\mu(\mathcal{B}\vee\pi^{-1}(\mathcal{C})|\mathcal{D}).
           \end{align*}
           Since $D_{ij}\subset B_{ij}$ for $0\leq i\leq \alpha$ and $1\leq j\leq \beta_i$, it holds that
           \begin{align*}
                &H_{\mu}(\mathcal{B}\vee\pi^{-1}(\mathcal{C})|\mathcal{D})\\&=\sum_{i=0}^\alpha\mu(D_{i0})\sum_{j=1}^{\beta_i}\left(-\frac{\mu(D_{i0}\cap B_{ij})}{\mu(D_{i0})}\log\frac{\mu(D_{i0}\cap B_{ij})}{\mu(D_{i0})}\right)\\&\leq 
                 \sum_{i=0}^\alpha \mu(D_{i0})\log\beta_i\,\,\text{ by }\eqref{eqq2.2}\\&<1.
           \end{align*}
         Thus,
         \[h_\mu(T,\mathcal{B}|Z)\leq h_\mu(T,\mathcal{D}|Z)+1.\]
       \end{proof}
Therefore, we obtain
\begin{align*}
    \omega h_\mu(T,\mathcal{B}|Z)+(1-\omega)h_\nu(S,\mathcal{A}|Z)\leq  \omega h_\mu(T,\mathcal{D}|Z)+(1-\omega)h_\nu(S,\mathcal{C}|Z)+2.
\end{align*}

For each $z\in Z$, we define
\[\mathcal{A}^{n,z}=\mathcal{A}^n\cap \varphi^{-1}(z)\text{ and }\mathcal{B}^{n,z}=\mathcal{B}^n\cap \psi^{-1}(z)\]
and 
\[\mathcal{C}^{n,z}=\mathcal{C}^n\cap \varphi^{-1}(z)\text{ and }\mathcal{D}^{n,z}=\mathcal{D}^n\cap\psi^{-1}(z).\]
Clearly, $\pi^{-1}(\mathcal{C}^{n,z})\preceq\mathcal{D}^{n,z}$.

Recall that $\nu(\cdot)=\mu(\pi^{-1}(\cdot))$ and for each $\mathcal{A}\in \mathcal{P}_Y$
\[H_\nu(\mathcal{A}|Z)=H_\mu(\pi^{-1}(\mathcal{A})|Z)=\int_Z H_{\mu_z}(\pi^{-1}(\mathcal{A}))d\kappa(z).\]
So we can write $\nu_z=\pi \mu_z$ for $\kappa$-a.e. $z\in Z$. Recall that $\mu_z$ and $\nu_z$ has full support on $\psi^{-1}(z)$ and $\varphi^{-1}(z)$, respectively. Therefore,
\begin{align*}
    \omega h_\mu(T,\mathcal{D}|Z)+(1-\omega)h_\nu(S,\mathcal{C}|Z)&=\lim_{n\to \infty}\left(\int_Z\omega\frac{H_{\mu_z}(\mathcal{D}^n)}{n}+(1-\omega)\frac{H_{\nu_z}(\mathcal{C}^n)}{n}d\kappa(z)\right)\\&
    =\lim_{n\to \infty}\left(\int_Z\omega\frac{H_{\mu_z}(\mathcal{D}^{n,z})}{n}+(1-\omega)\frac{H_{\nu_z}(\mathcal{C}^{n,z})}{n}d\kappa(z)\right),
\end{align*}
where $\mu=\int_Z\mu_zd\kappa(z)$ and $\nu_z=\pi\mu_z$.
Since $\pi^{-1}(\mathcal{C}^{n,z})\preceq\mathcal{D}^{n,z}$, we obtain
\begin{align*}
    \omega h_\mu(T,\mathcal{D}|Z)+(1-\omega)h_\nu(S,\mathcal{C}|Z)=\lim_{n\to \infty}\frac{1}{n}\left(\int_ZH_{\nu_z}(\mathcal{C}^{n,z})+\omega H_{\mu_z}(\mathcal{D}^{n,z}|\pi^{-1}(\mathcal{C}^{n,z}))d\kappa(z)\right).
\end{align*}

  For each $C\in \mathcal{C}^{n,z}$, we define
    \[\mathcal{D}^{n,z}_C=\{D\in \mathcal{D}^{n,z}|D\cap \pi^{-1}(C)\ne\varnothing\}=\{D\in \mathcal{D}^{n,z}|D\subset \pi^{-1}(C)\}.\]
   Then
    \[\pi^{-1}(C)=\bigsqcup_{D\in \mathcal{D}^{n,z}_C}D.\]
    For each $C\in \mathcal{C}^{n,z}$ with $\nu_z(C)>0$, and $D\in \mathcal{D}_C^{n,z}$, we write
    \[\mu_z(D|C)=\frac{\mu_z(D)}{\mu_z(\pi^{-1}(C))}=\frac{\mu_z(D)}{\nu_z(C)}.\]
    It is clear that 
    \[\sum_{D\in \mathcal{D}_C^{n,z}}\mu_z(D|C)=1.\]
    \begin{claim}\label{cl4.4}
       For $\kappa$-a.e. $z\in Z$ and $n\in \mathbb{N}$, we have 
        \[H_{\nu_z}(\mathcal{C}^{n,z})+\omega H_{\mu_z}(\mathcal{D}^{n,z}|\pi^{-1}(\mathcal{C}^{n,z}))+\omega\int_X\mathbb{S}_nfd\mu_z\leq\log\sum_{C\in \mathcal{C}^{n,z}}\left(\sum_{D\in \mathcal{D}_C^{n,z}}e^{\sup_{x\in D}\mathbb{S}_nf(x)}\right)^\omega.\]

    \end{claim}
    \begin{proof}
        We have
     \begin{align*}
    \int_X\mathbb{S}_nfd\mu_z&=\sum_{D\in \mathcal{D}^{n,z}}\int_{D}\mathbb{S}_nfd\mu_z\leq\sum_{D\in\mathcal{D}^{n,z}}\mu_z(D)\sup_{x\in D}\mathbb{S}_nf(x)\\&=\sum_{C\in\mathcal{C}^{n,z}}\nu_z(C)\left(\sum_{D\in \mathcal{D}_C^{n,z}}\mu_z(D|C)\sup_{x\in D}\mathbb{S}_nf(x)\right).
    \end{align*}
    By Lemma \ref{lem3.3}, we obtain
     \[\sum_{D\in \mathcal{D}_C^{n,z}}\left(-\mu_z(D|C)\log\mu_z(D|C)+\mu_z(D|C)\sup_{x\in D}\mathbb{S}_nf(x)\right)\leq\log\sum_{D\in\mathcal{D}_C^{n,z}}e^{\sup_{x\in D}}\mathbb{S}_nf(x).\]
     Hence, 
     \[H_{\mu_z}(\mathcal{D}^{n,z}|\pi^{-1}(\mathcal{C}^{n,z}))+\int_X\mathbb{S}_nfd\mu_z\leq \log\sum_{D\in\mathcal{D}_C^{n,z}}e^{\sup_{x\in D}}\mathbb{S}_nf(x).\]
     using Lemma \ref{lem3.3} again, it holds that
        \begin{align*}
            &H_{\nu_z}(\mathcal{C}^{n,z})+\omega H_{\mu_z}(\mathcal{D}^{n,z}|\pi^{-1}(\mathcal{C}^{n,z}))+\omega\int_X\mathbb{S}_nfd\mu_z\\&
            \leq \sum_{C\in\mathcal{C}^{n,z}}\left(-\nu_z(C)\log\nu_z(C)+\nu_z(C)\log\left(\sum_{D\in\mathcal{D}_C^{n,z}}e^{\sup_{x\in D}}\mathbb{S}_nf(x)\right)^\omega\right)\\&
            \leq \log\sum_{C\in\mathcal{C}^{n,z}}\left(\sum_{D\in\mathcal{D}_C^{n,z}}e^{\sup_{x\in D}}\mathbb{S}_nf(x)\right)^\omega.
        \end{align*}
    \end{proof}
     Let $d$ and $d'$ be metrics on $X$ and $Y$, respectively. Since $C_i\in \mathcal{C}$, $1\leq i\leq \alpha$ are mutually disjoint compact subset of $Y$ and $D_{ij}$, $0\leq i\leq \alpha$, $1\leq j\leq \beta_i$ are mutually disjoint compact subsets of $X$. Hence, we can find $\varepsilon>0$ (independent of the choice of $z\in Z$) such that for any $z\in Z$
    \begin{enumerate}
        \item  for any $y\in C_i^z(\subset C_i)\in \mathcal{C}^z$ and $y'\in C_{i'}^{z}(\subset C_{i'})\in \mathcal{C}^z$ with $i\ne i'\ne 0$,
        \[\varepsilon<d'(y,y');\]
        \item  for any $x\in D_{ij}^{z}(\subset D_{ij})\in \mathcal{D}^z$ and $x'\in D_{ij'}^{z}(\subset D_{ij'})\in \mathcal{D}^z$ with $j\ne j'\ne 0$,
        \[\varepsilon<d(x,x').\]
    \end{enumerate}
        \begin{claim}\label{cl4.5}
        For any $z\in Z$ and $n\in \mathbb{N}$:
        \begin{enumerate}
            \item [\rm{(1)}] If a subset $V\subset Y$ with $\diam(V,d'_n)<\varepsilon$, then the member of $C\in \mathcal{C}^{n,z}$ having non-empty intersection with $V$ at most $2^n$, namely,
            \[\left|\{C\in \mathcal{C}^{n,z}|C\cap V\ne\varnothing\}\right|\leq 2^n.\]
            \item [\rm{(2)}] If a subset $U\subset X$ with $\diam(U,d_n)<\varepsilon$, then for each $C\in \mathcal{C}^{n,z}$, the number of $D^{n,z}\in \mathcal{D}_C^{n,z}$ having non-empty intersection with $U$ is at most $2^n$:
            \[\left|\{D\in \mathcal{D}_{C}^{n,z}|D\cap U\ne\varnothing\}\right|\leq 2^n.\]
        \end{enumerate}
    \end{claim}
        \begin{proof}
       $(1)$ For each $0\leq k<n$, the set $S^kV$ may have non-empty intersection with $C_0^{z}$ and at most one set of $\{C_1^{z},\cdots,C_\alpha^{z}:C_i^z=C_i\cap \varphi^{-1}(z)\}$. Hence, the statement holds.

       $(2)$ Each $C\in \mathcal{C}^{n,z}$ has the form 
       \[C=C_{i_0}\cap S^{-1}C_{i_1}\cap S^{-2}C_{i_2}\cap \cdots\cap S^{-{n-1}}C_{i_{n-1}}\cap \varphi^{-1}(z),\]
       with $0\leq i_0,\cdots,i_{n-1}\leq\alpha$. Recall that $\{D_{i_k0},D_{i_k1},\cdots,D_{i_k\beta_{i_k}}\}$ is a partition of $\pi^{-1}(C_{i_k}).$ Then any set $D\in \mathcal{D}^{n,z}_C$ has the form 
       \[D=D_{i_0j_0}\cap T^{-1}D_{i_1j_1}\cap T^{-2}D_{i_2j_2}\cap \cdots\cap T^{-{n-1}}D_{i_{n-1}j_{n-1}}\cap \psi^{-1}(z),\]
       with $0\leq j_k\leq\beta_{i_k}$ for $0\leq k\leq n-1$.

       For each $0\leq k\leq n-1,$ the set $T^k U$ may have non-empty intersection with $D_{i_k0}$ and at most one set in $\{D^z_{i_k1},D^z_{i_k2},\cdots,D^z_{i_k\beta_{i_k}}:D^z_{i_k1}=D_{i_k1}\cap \psi^{-1}(z)\}$. The statement follows from this.
    \end{proof}
    
      Let $n\in \mathbb{N}$. Suppose there is an open cover $\{V_i^{n,z}\}_{i=1}^k\in \mathcal{C}^o_Y(\varphi^{-1}(z))$ with $\diam(V^{n,z}_i,d'_n)<\varepsilon$ for all $1\leq i\leq k$. Moreover, suppose that for each $1\leq i\leq k,$ there is an open cover $\{U_{ij}^{n,z}\}_{j=1}^{m_i}\in \mathcal{C}_X^o(\pi^{-1}(V_i^{n,z}))$ with $\diam(U^{n,z}_{ij},d_n)<\varepsilon$ for all $1\leq j\leq m_i$. For each $z\in Z$, we are going to prove that
    \begin{align}\label{eq4.2}
        \log\sum_{C\in\mathcal{C}^{n,z}}\left(\sum_{D\in \mathcal{D}_C^{n,z}}e^{\sup_D\mathbb{S}_nf}\right)^\omega\leq 2n\log2+\log\sum_{i=1}^k\left(\sum_{j=1}^{m_i}e^{\sup_{U^{n,z}_{ij}}\mathbb{S}_nf}\right)^\omega.
    \end{align}
    
  Indeed, suppose \eqref{eq4.2} is already proved. Then by Claim \ref{cl4.4}, for $\kappa$-a.e. $z\in Z$, we obtain
      \begin{align}\label{eq4}
        H_{\nu_z}(\mathcal{C}^{n,z})+\omega H_{\mu_z}(\mathcal{D}^{n,z}|\pi^{-1}(\mathcal{C}^{n,z}))+\omega\int_X\mathbb{S}_nfd\mu_z\leq 2n\log2+\log\sum_{i=1}^k\left(\sum_{j=1}^{m_i}e^{\sup_{x\in U^{n,z}_{ij}}\mathbb{S}_nf(x)}\right)^\omega.
    \end{align}
       Thus, by Claim \ref{cl4.2} and \ref{cl4.3}, we have
    \begin{align*}
        &\omega H_\mu(\mathcal{B}^n|Z)+(1-\omega)H_\nu(\mathcal{A}^n|Z)+\omega\int_X\mathbb{S}_nfd\mu\\&\leq \int_Z H_{\nu_z}(\mathcal{C}^{n,z})+\omega H_{\mu_z}(\mathcal{D}^{n,z}|\pi^{-1}(\mathcal{C}^{n,z}))d\kappa(z)+{\omega}\int_X\mathbb{S}_nfd\mu+2n\\&
        \leq 2n+2n\log 2+\int_Z\log P_Z^\omega(\pi,T,z,f,n,\varepsilon)d\kappa(z)\\&
        \leq 2n+2n\log2+\sup_{z\in Z}\log P_Z^\omega(\pi,T,z,f,n,\varepsilon),
    \end{align*}
    the second-to-last inequality is given by taking infimum over all $\{V^{n,z}_i\}$ and $\{U^{n,z}_{ij}\}$ satisfying \eqref{eq4}.
Then, divide the above inequality by $n$  and let $n$ to $\infty$, we have
    \begin{align*}
        &\omega h_\mu(T,\mathcal{B}|Z)+(1-\omega)h_\nu(S,\mathcal{A}|Z)+\omega\int_Xfd\mu\\&
        =\lim_{n\to \infty}\frac{1}{n}\left(\omega H_\mu(T,\mathcal{B}^n|Z)+(1-\omega)H_\nu(S,\mathcal{A}^n|Z)+\omega\int_X\mathbb{S}_nfd\mu\right)\\&
        \leq2+2\log 2+\limsup\frac{1}{n}\int_Z\log P_Z^\omega(\pi,T,z,f,n,\varepsilon)d\kappa(z)\\&
        \leq2+2\log 2+\int_Z\limsup\frac{1}{n}\log P_Z^\omega(\pi,T,z,f,n,\varepsilon)d\kappa(z)\\&
        \leq 2\log2+2+\lim_{n\to \infty}\left(\frac{\sup_{z\in Z}\log P^\omega_Z(\pi,T,z,f,n,\varepsilon)}{n}\right).
    \end{align*}
    Finally, taking $\varepsilon$ to $0$ and by Fatou's Lemma, we obtain
    \begin{align*}
        &\omega h_\mu(T,\mathcal{B}|Z)+(1-\omega)h_\nu(\mathcal{A}|Z)+\omega\int_Xfd\mu\\&
        \leq 2\log2+2+\int_Z P^\omega_Z(\pi,T,z,f)d\kappa(z)
        \\&\leq 2\log2+2+P^\omega_Z(\pi,T,f).
    \end{align*}
   Therefore, the rest is to prove \eqref{eq4.2}.

     Given $n\in \mathbb{N}$ and $z\in Z$. For each $D^{n,z}\in \mathcal{D}^{n,z}$, we have
   \[e^{\sup_{x\in D^{n,z}}\mathbb{S}_nf(x)}\leq \sum_{U_{ij}^{n,z}\cap D^{n,z}\ne\varnothing}e^{\sup_{x\in U_{ij}^{n,z}}\mathbb{S}_nf(x)}.\]
Here the sum is taken over all index $(i,j)$ such that $U_{ij}^{n,z}$ has non-empty intersection with $D^{n,z}.$

Let $C\in \mathcal{C}^{n,z}.$ We define $\mathcal{V}_C$ as the set of $1\leq i\leq k$ such that $V^{n,z}_i\cap C\ne\varnothing$. By Claim \ref{cl4.5}, we get 
    \[\sum_{D^{n,z}\in \mathcal{D}_C^{n,z}}e^{\sup_{D^{n,z}}\mathbb{S}_nf}\leq 2^n\sum_{i\in \mathcal{V}_C}\sum_{j=1}^{m_j}e^{\sup_{U_{ij}^{n,z}}\mathbb{S}_nf}.\]
    Therefore, 
    \begin{align*}
        \left(\sum_{D^{n,z}\in \mathcal{D}_C^{n,z}}e^{\sup_{x\in D^{n,z}}\mathbb{S}_nf(x)}\right)^\omega&\leq 2^{n\omega}\left(\sum_{i\in \mathcal{V}_C}\sum_{j=1}^{m_j}e^{\sup_{x\in U_{ij}^{n,z}}\mathbb{S}_nf(x)}\right)^\omega\\&
        \leq 2^{n\omega}\sum_{i\in \mathcal{V}_C}\left(\sum_{j=1}^{m_j}e^{\sup_{x\in U_{ij}^{n,z}}\mathbb{S}_nf(x)}\right)^\omega.
    \end{align*}
       \begin{remark}
        The last inequality holds since for $0\leq \omega\leq 1$ and non-negative numbers $x,y,$
\[(x+y)^\omega\leq x^\omega+y^\omega.\]
    \end{remark}
     Thus, 
    \[\sum_{C\in\mathcal{C}^{n,z}}\left(\sum_{D^{n,z}\in \mathcal{D}_C^{n,z}}e^{\sup_{x\in D^{n,z}}\mathbb{S}_nf(x)}\right)^\omega\leq 2^{n\omega}\sum_{C\in \mathcal{C}^{n,z}}\left(\sum_{i\in \mathcal{V}_C}\left(\sum_{j=1}^{m_i}e^{\sup_{x\in U^{n,z}_{ij}}\mathbb{S}_nf(x)}\right)^\omega\right).\]
    By Claim \ref{cl4.5}, for each $1\leq i\leq k$, the number $C\in \mathcal{C}^{n,z}$ satisfying $i\in \mathcal{V}_C$ is at most $2^n.$ So
\[2^{n\omega}\sum_{C\in \mathcal{C}^{n,z}}\left(\sum_{i\in \mathcal{V}_C}\left(\sum_{j=1}^{m_j}e^{\sup_{x\in U^{n,z}_{ij}}\mathbb{S}_nf(x)}\right)^\omega\right)\leq 2^{n\omega}\cdot 2^n\sum_{i=1}^k\left(\sum_{j=1}^{m_i}e^{\sup_{x\in U^{n,z}_{ij}}\mathbb{S}_nf(x)}\right)^\omega.\]
Therefore, 
\[\sum_{C\in\mathcal{C}^{n,z}}\left(\sum_{D^{n,z}\in \mathcal{D}_C^{n,z}}e^{\sup_{x\in D^{n,z}}\mathbb{S}_nf(x)}\right)^\omega\leq 
2^{n\omega}\cdot 2^n\sum_{i=1}^k\left(\sum_{j=1}^{m_i}e^{\sup_{x\in U^{n,z}_{ij}}\mathbb{S}_nf(x)}\right)^\omega.\]
Taking the logarithm,
\begin{align*}
    \log\sum_{C\in\mathcal{C}^{n,z}}\left(\sum_{D^{n,z}\in \mathcal{D}_C^{n,z}}e^{\sup_{x\in D^{n,z}}\mathbb{S}_nf(x)}\right)^\omega&\leq (n+n\omega)\log2+\log\sum_{i=1}^k\left(\sum_{j=1}^{m_i}e^{\sup_{x\in U^{n,z}_{ij}}\mathbb{S}_nf(x)}\right)^\omega\\&
    \leq 2n\log2+\sum_{i=1}^k\left(\sum_{j=1}^{m_i}e^{\sup_{x\in U^{n,z}_{ij}}\mathbb{S}_nf(x)}\right)^\omega,
\end{align*}
which proves \eqref{eq4.2}. Therefore, we finish the proof.

(3) As $\psi$ maps $\mathcal{M}(X,T)$ onto $\mathcal{M}(Z,R)$, the result follows directly from (2).
\end{proof}

\subsection{Proof of the other side of variational principles}
 \,
 
Recall that we write
\[P^\omega_{Z,var}(\pi,T,f)=\sup_{\mu\in \mathcal{M}(X,T)}\left(\omega h_\mu(T|R)+(1-\omega)h_{\pi\mu}(S|R)+\omega\int_Xfd\mu\right),\] then we state the following result.

\begin{proposition}\label{prop4.11}
 Let $\pi:(X,T)\to (Y,S)$ be a factor map between zero-dimensional TDSs and suppose $\varphi$ is a factor map from $(Y, S)$ to $(Z, R)$. Then for any $0\leq \omega\leq 1$ and $f\in C(X)$, we have
    \begin{enumerate}
        \item [\rm{(1)}] $P^\omega_{Z}(\pi,T,f)\leq P^\omega_{Z,var}(\pi,T,f);$
        \item [\rm{(2)}] $\sup_{z\in Z}P_Z^\omega(\pi,T,z,f)\leq P^\omega_{Z,var}(\pi,T,f).$
    \end{enumerate}
\end{proposition}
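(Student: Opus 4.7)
The plan is to adapt Tsukamoto's Gibbs-distribution strategy from the non-relative case (\cite{Tsu}) to the present fibered setting, exploiting the zero-dimensionality of $X$ and $Y$ to work with clopen partitions for which finite-block entropy functionals are continuous in the measure.

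First, fix $\varepsilon>0$ and use zero-dimensionality to choose finite clopen partitions $\mathcal{V}$ of $Y$ and $\mathcal{U}$ of $X$ with $\diam(\mathcal{V},d')<\varepsilon$, $\diam(\mathcal{U},d)<\varepsilon$, and $\pi^{-1}\mathcal{V}\preceq\mathcal{U}$. For any $z\in Z$ and $n\in\mathbb{N}$, the atoms of $\mathcal{V}^{n-1}_0$ meeting $\varphi^{-1}(z)$ form a clopen cover of $\varphi^{-1}(z)$ with $d'_n$-diameter less than $\varepsilon$, and $\mathcal{U}^{n-1}_0$ refines the $\pi$-preimage of this cover. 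Choosing a representative point $x_{V,U}\in U$ in each atom yields the combinatorial upper bound
\[P^\omega_Z(\pi,T,z,f,n,\varepsilon)\leq \sum_{V}\Bigl(\sum_{U\subset\pi^{-1}V}e^{\sup_U\mathbb{S}_nf}\Bigr)^{\omega}=:W_n(z),\]
with $V$ ranging over atoms of $\mathcal{V}^{n-1}_0$ meeting $\varphi^{-1}(z)$.

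Next, apply Lemma \ref{lem3.3} twice---first to the inner sum $(\sum_U e^{c_U})^\omega$, then to the outer sum---and use the identity $H_\mu(\mathcal{U})=H_\mu(\mathcal{U}|\pi^{-1}\mathcal{V})+H_{\pi\mu}(\mathcal{V})$ (valid because $\pi^{-1}\mathcal{V}\preceq\mathcal{U}$) to reassemble the conditional and base entropies into a convex combination with weights $\omega$ and $1-\omega$. The optimizers produce a Gibbs-type probability measure $\mu_n^z$ supported on the finite point set $\{x_{V,U}\}$ for which
\[\log W_n(z)=\omega\!\int\!\mathbb{S}_nf\,d\mu_n^z+\omega H_{\mu_n^z}(\mathcal{U}^{n-1}_0)+(1-\omega)H_{\pi\mu_n^z}(\mathcal{V}^{n-1}_0).\]
Choose $z_n\in Z$ with $\log W_n(z_n)\geq\sup_z\log W_n(z)-1$ (or fix a single prescribed $z$ for part 2), form the asymptotically invariant average $\sigma_n=\tfrac{1}{n}\sum_{k=0}^{n-1}T^k_\ast\mu_n^{z_n}$, and pass to a weak-$\ast$ accumulation point $\mu\in\mathcal{M}(X,T)$. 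The standard Misiurewicz concavity argument (using continuity of $\nu\mapsto H_\nu(\mathcal{U}^{p-1}_0)$ in $\nu$, which is crucial and is the reason we need $\mathcal{U},\mathcal{V}$ clopen) converts the finite-block quantities into dynamical entropies of $\mu$, producing
\[\limsup_n\tfrac{1}{n}\log W_n(z_n)\leq \omega h_\mu(T,\mathcal{U})+(1-\omega)h_{\pi\mu}(S,\mathcal{V})+\omega\!\int\!f\,d\mu,\]
and refining $\mathcal{U},\mathcal{V}$ (equivalently letting $\varepsilon\to 0$) then yields an absolute variational inequality.

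The main obstacle is upgrading the absolute entropies $h_\mu(T),h_{\pi\mu}(S)$ to the relative entropies $h_\mu(T|R),h_{\pi\mu}(S|R)$ demanded by Definition \ref{defn2.1}. By the Rohlin-Abramov Theorem \ref{thm3.2} the discrepancy in each case is $h_{\psi\mu}(R)$, so one must show that the constructed $\mu$ has negligible $h_{\psi\mu}(R)$. The heuristic reason is that $\mu_n^{z_n}$ is fibered over the single point $z_n$, so $\psi\sigma_n$ is essentially the orbit-average of $\delta_{z_n}$ under $R$, whose weak-$\ast$ limits have zero dynamical entropy. To make this rigorous without assuming $Z$ is zero-dimensional, one preliminarily passes to a zero-dimensional principal extension of $(Z,R)$ via Proposition \ref{prop4.10} and performs the entire construction there, so that the $R$-entropy of $\psi\sigma_n$ can be estimated through an explicit clopen partition. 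The amplification trick (Proposition \ref{prop2.2}) absorbs bounded constant errors appearing in the passage to the limit, and parts 1 and 2 of the proposition follow simultaneously from the same construction since the only difference is whether $z_n$ is chosen to approximate $\sup_z\log W_n(z)$ (for part 1) or is held fixed at a prescribed $z\in Z$ (for part 2).
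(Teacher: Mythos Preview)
Your setup through the Gibbs identity is essentially the paper's: clopen partitions $\mathcal{U}\succeq\pi^{-1}\mathcal{V}$, the combinatorial bound $P^\omega_Z(\pi,T,z,f,n,\varepsilon)\le W_n(z)$, and the Gibbs measure $\mu_n^z$ realizing $\log W_n(z)$ as a weighted entropy--plus--integral. The divergence, and the genuine gap, is in how you pass from absolute to relative entropy.

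Your claim that the weak-$\ast$ limits of the orbit averages $\psi\sigma_n=\tfrac1n\sum_{i=0}^{n-1}\delta_{R^iz_n}$ have zero dynamical entropy is simply false: if $z_n$ is generic for an $R$-invariant measure $\kappa$ of positive entropy (and such points are abundant), then $\psi\sigma_n\to\kappa$ and $h_\kappa(R)>0$. Passing to a zero-dimensional principal extension of $Z$ does not help, since the phenomenon persists there; nor does the amplification trick, because the unwanted term scales as $h_{\psi\mu}(R^k)=k\,h_{\psi\mu}(R)$ and hence is not a bounded additive error. So the route ``absolute entropy first, subtract $h_{\psi\mu}(R)$ later'' cannot be completed.

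The paper bypasses this entirely by carrying \emph{conditional} entropy through the whole argument. The key observation is that, because $\mu_n^z$ is supported on the single fiber $\psi^{-1}(z)$, one has $H_{\mu_n^z}(\mathcal{U}^{n-1}_0\,|\,Z)=H_{\mu_n^z}(\mathcal{U}^{n-1}_0)$ and likewise for $\mathcal{V}$; thus your Gibbs identity already reads
\[
\log W_n(z)=\omega\!\int\!\mathbb{S}_nf\,d\mu_n^z+\omega H_{\mu_n^z}(\mathcal{U}^{n-1}_0\,|\,Z)+(1-\omega)H_{\pi\mu_n^z}(\mathcal{V}^{n-1}_0\,|\,Z).
\]
The Misiurewicz block argument then goes through verbatim for $H_{\cdot}(\,\cdot\,|Z)$, using $H_\mu(T^{-k}\mathcal{A}\,|\,Z)\le H_{T^k\mu}(\mathcal{A}\,|\,Z)$ (which holds because $T^{-1}\psi^{-1}\mathcal{B}_Z\subset\psi^{-1}\mathcal{B}_Z$) together with the concavity $H_{\sum p_i\mu_i}(\mathcal{A}\,|\,Z)\ge\sum p_iH_{\mu_i}(\mathcal{A}\,|\,Z)$ (cf.\ \cite[Lemma~3.2]{Lp}). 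Passing to the limit along clopen $\mathcal{U},\mathcal{V}$ yields $h_\mu(T,\mathcal{U}\,|\,Z)$ and $h_{\pi\mu}(S,\mathcal{V}\,|\,Z)$ directly, with no base-entropy correction to remove.
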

\begin{proof}
    Let $\varepsilon>0$ and $\mathcal{A}$ be a clopen partition of $Y$ with $\diam(\mathcal{A},d')<\varepsilon$ and denote $\mathcal{A}^n=\bigvee_{i=0}^{n-1}S^{-i}\mathcal{A}$. For each $z\in Z$ and $n\in \mathbb{N}$, we set 
\[\mathcal{A}^{n,z}=\{A\cap \varphi^{-1}(z)|A\in \mathcal{A}^n\}.\]
Since $X$ is zero-dimensional, for each $1\leq i\leq\alpha$, we can take a clopen partition $\mathcal{B}=\{B_{ij}\}\in\mathcal{P}_X$ such that  $\diam(\mathcal{B},d)<\varepsilon$ and $\pi^{-1}(\mathcal{A})\preceq \mathcal{B}$, also, we write $\mathcal{B}^n=\bigvee_{i=0}^{n-1}T^{-i}\mathcal{B}.$. For each $A_i\in \mathcal{A}$ we have
\[\pi^{-1}(A_i)=\bigsqcup_{j=1}^{\beta_{ij}}B_{ij}.\]

We put
\[\mathcal{B}^{n,z}=\{B\cap \psi^{-1}(z):B\in \mathcal{B}^n\}=\{B\cap \pi^{-1}(A^{n,z})|A^{n,z}\in \mathcal{A}^{n,z}, B\in \mathcal{B}^{n}\},\]
then $\mathcal{B}^{n,z}$ is a partition of $\psi^{-1}z$ and each $A^{n,z}$ is a disjoint union of some $B^{n,z}\in \mathcal{B}^{n,z}$. For each $A^{n,z}\in \mathcal{A}^{n,z}$, we define
\begin{align}\label{eq4.5}
    \mathcal{B}_{A^{n,z}}^{n,z}=\{B^{n,z}\in\mathcal{B}^{n,z}|B^{n,z}\cap\pi^{-1}(A^{n,z})\ne\varnothing\}=\{B^{n,z}\in \mathcal{B}^{n,z}|B^{n,z}\subset \pi^{-1}(A^{n,z})\}.
\end{align}
So we have
\[\pi^{-1}(A^{n,z})=\bigsqcup_{B^{n,z}\in \mathcal{B}_{A^{n,z}}^{n,z}}B^{n,z}.\]
For each $n\in \mathbb{N}$ and $z\in Z$, we set
\[W_{A^{n,z}}=\sum_{B^{n,z}\in\mathcal{B}_{A^{n,z}}^{n,z}}e^{\sup_{x\in B^{n,z}}\mathbb{S}_nf(x)}\]
and define
\[W_{n,z}=\sum_{A^{n,z}\in \mathcal{A}^{n,z}}\left(W_{A^{n,z}}\right)^\omega.\]
Then, from the definition, we have the following property
    \[P^\omega_Z(\pi,T,z,f,n,\varepsilon)\leq W_{n,z}.\]

Let $\varepsilon_0>0$ small enough. For each $n\in\mathbb{N}$ we choose a point $z'\in Z$ such that 
\begin{align}\label{eq4.6}
    \sup_{z\in Z}\log P_Z^\omega(\pi,T,z,f,n,\varepsilon)\leq \log P_Z^\omega(\pi,T,z',f,n,\varepsilon)+\varepsilon_0.
\end{align}

Now, fix $n\in \mathbb{N}$ and let $z\in Z$ be a point satisfying condition \eqref{eq4.6}. We assume that the elements of $\mathcal{B}^{n,z}$ and $\mathcal{A}^{n,z}$ are all non-empty. For each $B^{n,z}\in \mathcal{B}^{n,z}$, we denote by $\mathcal{A}^{n,z}(B^{n,z})$ the unique element in $\mathcal{A}^{n,z}$ containing $\pi(B^{n,z}).$
Since each $B^{n,z}\in \mathcal{B}^{n,z}$ is compact, we can take a point $x_{B^{n,z}}\in B^{n,z}$ satisfying $\mathbb{S}_nf(x_{B^{n,z}})=\sup_{x\in B^{n,z}}\mathbb{S}_nf(x).$ and we define a probability measure on $X$ by 
\begin{align*}
    \sigma_{n}&=\frac{1}{W_{n,z}}\sum_{B^{n,z}\in \mathcal{B}^{n,z}}(W_{\mathcal{A}^{n,z}(B^{n,z})})^{\omega-1}e^{\mathbb{S}_nf(x_{B^{n,z}})}\cdot \delta_{x_{B^{n,z}}}\\& =\frac{1}{W_{n,z}}\sum_{A^{n,z}\in \mathcal{A}^{n,z}}\sum_{B^{n,z}\in \mathcal{B}^{n,z}_{A^{n,z}}}(W_{A^{n,z}})^{\omega-1}e^{\mathbb{S}_nf(x_{B^{n,z}})}\cdot \delta_{x_{B^{n,z}}},
\end{align*}
where $\delta_{x_{B^{n,z}}}$ is the probability measure mass on the point $x_{B^{n,z}}.$ We set 
\begin{align*}
    \mu_{n}=\frac{1}{n}\sum_{s=0}^{n-1}T^s\sigma_{n}.
\end{align*}
 We can take a subsequence $\{\mu_{n_k}\}$ converging to an invariant measure $\mu\in \mathcal{M}(X,T),$ then we shall prove that 
\begin{align*}
    &\omega h_\mu(T,\mathcal{B}|Z)+(1-\omega)h_{\pi\mu}(S,\mathcal{A}|Z)+\omega\int_Xfd\mu\\&\geq \lim_{n\to \infty}\frac{\log W_{n,z}}{n}\\&\geq\lim_{n\to \infty}\frac{\sup_{z\in Z}\log \ P^\omega_Z(\pi,T,z,f,n,\varepsilon)-\varepsilon_0}{n}.  
\end{align*}
\begin{claim}\label{cl4.13}
 For any natural number $n\in \mathbb{N}$, let $z\in Z$ be a point satisfying condition \eqref{eq4.6} and $\sigma_n$ is the probability measure defined as above,  we have 
    \begin{align*}
        &\omega H_{\sigma_n}(\mathcal{B}^n|Z)+(1-\omega)H_{\pi\sigma_n}(\mathcal{A}^n|Z)+\omega\int_X \mathbb{S}_nfd\sigma_n\\&=\log W_{n,z}\\&\geq\sup_{z\in Z}\log P^\omega_Z(\pi,T,z,f,n,\varepsilon)-\varepsilon_0.
    \end{align*}
\end{claim}
\begin{proof}
     From the construction of the probability measure $\sigma_n,$ for each $B\in \mathcal{B}^n,$ if $B\cap \psi^{-2}(z)\ne\emptyset$, we have 
   \[ \sigma_n(B)=\sigma_n(B^{n,z})=\frac{(W_{\mathcal{A}^{n,z}(B^{n,z})})^{\omega-1}}{W_{n,z}}e^{\mathbb{S}_nf(x_{B^{n,z}})}.
    \]
    Otherwise, $\sigma_n(B)=0$ and we assume that $0\log 0=0$. Then 
         \begin{align*}
         H_{\sigma_n}(\mathcal{B}^n|Z)&=H_{\sigma_n}(\mathcal{B}^{n,z}|Z)\\&
         =-\sum_{B^{n,z}\in \mathcal{B}^{n,z}}\frac{(W_{\mathcal{A}^{n,z}(B^{n,z})})^{\omega-1}}{W_{n,z}}e^{\mathbb{S}_nf(x_{B^{n,z}})}\log\left(\frac{(W_{\mathcal{A}^{n,z}(B^{n,z})})^{\omega-1}}{W_{n,z}}e^{\mathbb{S}_nf(x_{B^{n,z}})}\right)\\&=\frac{\log W_{n,z}}{W_{n,z}}\underbrace{\sum_{B^{n,z}\in \mathcal{B}^{n,z}}(W_{\mathcal{A}^{n,z}(B^{n,z})})^{\omega-1}e^{\mathbb{S}_nf(x_{B^{n,z}})}}_{(I)}\\& -\frac{\omega-1}{W_{n,z}}\underbrace{\sum_{B^{n,z}\in \mathcal{B}^{n,z}}(W_{\mathcal{A}^{n,z}(B^{n,z})})^{\omega-1}e^{\mathbb{S}_nf(x_{B^{n,z}})}\log W_{\mathcal{A}^{n,z}(B^{n,z})}}_{(II)}\\&
         -\underbrace{\sum_{B^{n,z}\in\mathcal{B}^{n,z}}\frac{(W_{\mathcal{A}^{n,z}(B^{n,z})})^{\omega-1}}{W_{n,z}}e^{\mathbb{S}_nf(x_{B^{n,z}})}\mathbb{S}_nf(x_{B^{n,z}})}_{(III)}.
     \end{align*}
     Based on the definition, we obtain that 
     the term $(I)$ can be calculated by 
     \[(I)=\sum_{A^{n,z}\in \mathcal{A}^{n,z}}\sum_{B^{n,z}\in\mathcal{B}^{n,z}_{A^{n,z}}}(W_{A^{n,z}})^{\omega-1}e^{\mathbb{S}_nf(x_{B^{n,z}})}=W_{n,z}.\]
     Term $(II)$ is obtained by 
     \[(II)=\sum_{A^{n,z}\in\mathcal{A}^{n,z}}\sum_{B^{n,z}\in\mathcal{B}^{n,z}_{A^{n,z}}}(W_{A^{n,z}})^{\omega-1}e^{\mathbb{S}_nf(x_{B^{n,z}})}\log W_{A^{n,z}}=\sum_{A^{n,z}\in\mathcal{A}^{n,z}}(W_{A^{n,z}})^\omega\log W_{A^{n,z}}.\]
     For term $(III)$, we have
     \[\int_X\mathbb{S}_nfd\sigma_n=\frac{1}{W_{n,z}}\sum_{B^{n,z}\in\mathcal{B}^{n,z}}(W_{\mathcal{A}^{n,z}(B^{n,z})})^{\omega-1}e^{\mathbb{S}_nf(x_{B^{n,z}})}\mathbb{S}_nf(x_{B^{n,z}})=(III).\]
     Therefore, 
     \begin{align}\label{eq4.7}
         H_{\sigma_n}(\mathcal{B}^n|Z)+\int_X\mathbb{S}_nfd\sigma_n=\log W_{n,z}-\frac{\omega-1}{W_{n,z}}\sum_{A^{n,z}\in\mathcal{A}^{n,z}}(W_{A^{n,z}})^\omega\log W_{A^{n,z}}.
     \end{align}
     Moreover, we have
     \[\pi\sigma_n=\frac{1}{W_{n,z}}\sum_{B^{n,z}\in \mathcal{B}^{n,z}}(W_{\mathcal{A}^{n,z}(B^{n,z})})^{\omega-1}e^{\mathbb{S}_nf(x_{B^{n,z}})}\cdot\delta_{\pi(x_{B^{n,z}})}.\]
     From the construction of $\sigma_n$, for each non-empty $A\in \mathcal{A}^{n},$ $A^{n,z}\subset A\cap \varphi^{-1}(z)$ we have
     \begin{align*}
         \pi\sigma_n(A)=\pi\sigma_n(A^{n,z})&=\frac{1}{W_{n,z}}\sum_{B^{n,z}\in\mathcal{B}^{n,z}_{A^{n,z}}}(W_{\mathcal{A}^{n,z}(B^{n,z})})^{\omega-1}e^{\mathbb{S}_nf(x_{B^{n,z}})}\\&
         =\frac{1}{W_{n,z}}(W_{A^{n,z}})^\omega,
     \end{align*}
     where $\mathcal{A}^{n,z}(B^{n,z})=A^{n,z}$ for $B^{n,z}\in \mathcal{B}^{n,z}_{A^{n,z}}$. Then
     \[H_{\pi\sigma_n}(\mathcal{A}^{n}|Z)=\log W_{n,z}-\omega\sum_{A^{n,z}\in \mathcal{A}^{n,z}}\frac{(W_{A^{n,z}})^\omega}{W_{n,z}}\log W_{A^{n,z}}.\]
      Combining this with \eqref{eq4.7}, we obtain
\[\omega H_{\sigma_n}(\mathcal{B}^n|Z)+(1-\omega)H_{\pi\sigma_n}(\mathcal{A}^n|Z)+\omega\int_X\mathbb{S}_nfd\sigma_n=\log W_{n,z}.\]
\end{proof}
The proof of the following claim is standard (See the proof of the variational principle in \cite{Pw}), but for the sake of completeness, we will write it out.
\begin{claim}\label{cl4.14}
    Let $m<n$ be positive integers. We have
    \[\frac{1}{m}H_{\mu_n}(\mathcal{B}^m|Z)\geq\frac{1}{n}H_{\sigma_n}(\mathcal{B}^n|Z)-\frac{2m\log|\mathcal{B}|}{n},\]
    \[\frac{1}{m}H_{\pi\mu_n}(\mathcal{A}^m|Z)\geq\frac{1}{n}H_{\pi\sigma_n}(\mathcal{A}^n|Z)-\frac{2m\log|\mathcal{A}|}{n},\]
    where $|\cdot|$ is the cardinal operator.
\end{claim}
\begin{proof}
 Here, we provide the proof for $\mathcal{B}^m$, the case of $\mathcal{A}^m$ is similar.
 We assume that $1<m<n$, and for $0\leq l<m,$ let $a(l)$ denote the integer part of $(n-l)m^{-1}$, so that $n=l+a(l)m+r$ with $0\leq r<q$. Then
 \[\mathcal{B}^n=\bigvee_{i=0}^{n-1}T^{-i}\mathcal{B}=\left(\bigvee_{j=0}^{a(l)-1}T^{-(l+jm)}\bigvee_{i=0}^{m-1}T^{-i}\mathcal{B}\right)\vee\bigvee_{t\in S_l}T^{-t}\mathcal{B},\]
 where $S_l$ is a subset of $\{0,1,\cdots,n-1\}$ with cardinality at most $2m$. Then we obtain
 \begin{align*}
     H_{\sigma_n}\left(\bigvee_{i=0}^{n-1}T^{-i}\mathcal{B}|Z\right)&\leq\sum_{j=0}^{a(l)-1}H_{\sigma_n}\left(T^{-(l+jm)}\bigvee_{i=0}^{m-1}T^{-i}\mathcal{B}|Z\right)+2m\log|\mathcal{B}|\\&\leq \sum_{j=0}^{a(l)-1}H_{T^{(l+jm)}\sigma_n}\left(\bigvee_{i=0}^{m-1}T^{-i}\mathcal{B}|Z\right)+2m\log|\mathcal{B}|.
 \end{align*}
 Sum this inequality over $l\in \{0,1,\cdots,m-1\},$ we have that 
 \begin{align*}
     mH_{\sigma_n}\left(\bigvee_{i=0}^{n-1}T^{-i}\mathcal{B}|Z\right)&\leq \sum_{t=0}^{n-1}H_{T^t\sigma_n}\left(\bigvee_{i=0}^{m-1}T^{-i}\mathcal{B}|Z\right)+2m^2\log|\mathcal{B}|\\&
     \leq nH_{\mu_n}\left(\bigvee_{i=0}^{m-1}T^{-i}\mathcal{B}|Z\right)+2m^2\log|\mathcal{B}|,
 \end{align*}
 where the second inequality depends on the general property of the conditional entropy of partitions $H_{\sum_ip_i\mu_i}(\mathcal{B}|\mathcal{R})\geq \sum_{i}p_iH_{\mu_i}(\mathcal{B}|\mathcal{R})$ which holds for any finite partition $\mathcal{B}$, $\sigma$-algebra $\mathcal{R}$, Borel probability measures $\mu_i$, and positive numbers $p_i$ with $p_1+\cdots+p_n=1$ (See Lemma \cite[Lemma 3.2]{Lp}). Dividing by $nm$ in the above inequality, we obtain
 \[\frac{1}{n}H_{\sigma_n}\left(\bigvee_{i=0}^{n-1}T^{-i}\mathcal{B}|Z\right)\leq \frac{1}{m}H_{\mu_n}\left(\bigvee_{i=0}^{m-1}T^{-i}\mathcal{B}|Z\right)+\frac{2m}{n}\log|\mathcal{B}|,\]
 which completes the proof of the claim.
\end{proof}
From the construction of $\mu_n$, we have
\[\int_Xfd\mu_n=\frac{1}{n}\int_X\sum_{i=0}^{n-1}f\circ T^id\sigma_n=\frac{1}{n}\int_X\mathbb{S}_nfd\sigma_n.\]
Then Claim \ref{cl4.14} implies that
\begin{align*}
    \frac{\omega}{m}&H_{\mu_n}(\mathcal{B}^m|Z)+\frac{1-\omega}{m}H_{\pi\mu_n}(\mathcal{A}^m|Z)+\omega\int_Xfd\mu_n\\&
    \geq\frac{\omega}{n}H_{\sigma_n}(\mathcal{B}^n|Z)+\frac{1-\omega}{n}H_{\sigma_n}(\mathcal{A}^n|Z)+\frac{\omega}{n}\int_Xfd\sigma_n-\frac{2m(\log|\mathcal{A}|\cdot|\mathcal{B}|)}{n}\\&=\frac{\log W_{n,z}}{n}-\frac{2m(\log|\mathcal{A}|\cdot|\mathcal{B}|)}{n}\\&\geq \frac{\sup_{z\in Z}\log P^\omega_Z(\pi,T,z,f,n,\varepsilon)-\varepsilon_0}{n}-\frac{2m(\log|\mathcal{A}|\cdot|\mathcal{B}|)}{n},
\end{align*}
where the last inequality is obtained from Claim \ref{cl4.13}.

For each $n\in \mathbb{N}$ the boundary of $\mathcal{A}^n$ and $\mathcal{B}^n$ has measure zero, so by taking $\mu_{n_k}\to \mu$ as $k\to \infty$, we have  
\begin{align*}
    &\frac{\omega}{m}H_{\mu}(\mathcal{B}^m|Z)+\frac{1-\omega}{m}H_{\pi\mu}(\mathcal{A}^m|Z)+\omega\int_Xfd\mu\\&\geq\lim_{n\to \infty}\frac{\sup_{z\in Z}\log P^\omega_Z(\pi,T,z,f,n,\varepsilon)-\varepsilon_0}{n}.
\end{align*}
Finally, let $m\to \infty$ and $\varepsilon_0\to 0$. We get 
\[\omega h_\mu(T,\mathcal{B}|Z)+(1-\omega)h_{\pi\mu}(S,\mathcal{A}|Z)+\omega\int_Xfd\mu\geq\lim_{n\to \infty}\frac{\sup_{z\in Z}\log P^\omega_Z(\pi,T,z,f,n,\varepsilon)}{n}.\]
Since for each $n\in \mathbb{N}$ and $z_0\in Z$,
\[\frac{\sup_{z\in Z}\log P^\omega_Z(\pi,T,z,f,n,\varepsilon)}{n}\geq \frac{\log P^\omega_Z(\pi,T,z_0,f,n,\varepsilon)}{n},\]
we also obtain
\[\omega h_\mu(T,\mathcal{B}|Z)+(1-\omega)h_{\pi\mu}(S,\mathcal{A}|Z)+\omega\int_Xfd\mu\geq \sup_{z\in Z}P_Z^\omega(\pi,T,z,f,\varepsilon).\]

Therefore, (1) and (2) are obtained by taking $\mathcal{A}$ and $\mathcal{B}$ with $\diam(\mathcal{A},d')\to 0$ and $\diam(\mathcal{B},d)\to 0.$

\end{proof}

\begin{theorem}
    Let  $\pi:(X,T)\to (Y,S)$ be a factor map between TDSs, $(Z,R)$ is a factor of $(Y,S)$ via $\varphi$ and $f\in C(X)$. Then for any $0\leq \omega\leq 1$, we have 
    \begin{enumerate}
        \item [\rm{(1)}] \[P^\omega_Z(\pi,T,f)=\sup_{\mu\in \mathcal{M}(X,T)}\left(\omega h_\mu(T|R)+(1-\omega)h_{\pi\mu}(S|R)+\omega\int_Xfd\mu\right).\]
        \item [\rm{(2)}] \[\sup_{z\in Z}P_Z^\omega(\pi,z,f)= \sup_{\mu\in \mathcal{M}(X,T)}\left(\omega h_\mu(T|R)+(1-\omega)h_{\pi\mu}(S|R)+\omega\int_Xfd\mu\right).\]
    \end{enumerate}
    Therefore, 
    \[P^\omega_Z(\pi,T,f)=\sup_{z\in Z}P_Z^\omega(\pi,z,f).\]

\end{theorem}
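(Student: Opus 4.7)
The plan is to combine the two halves already in place: Proposition \ref{prop4.1} supplies the inequality $P^\omega_{Z,var}(\pi,T,f)\le P^\omega_Z(\pi,T,f)$ and $P^\omega_{Z,var}(\pi,T,f)\le \sup_{z\in Z}P^\omega_Z(\pi,z,f)$ in full generality, while Proposition \ref{prop4.11} supplies the reverse inequality but only when $X$ and $Y$ are zero-dimensional. The bridge is Proposition \ref{prop4.10}, which sandwiches $(\pi,T,f)$ between a zero-dimensional model $(\Pi',T',f\circ\eta\circ\rho)$ and itself at the level of the variational quantity.

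First I would record the chain of inequalities that Proposition \ref{prop4.10} provides: for any $z\in Z$,
\begin{align*}
P^\omega_Z(\pi,T,z,f)&\le P^\omega_Z(\Pi',T',z,f\circ\eta\circ\rho),\\
P^\omega_Z(\pi,T,f)&\le P^\omega_Z(\Pi',T',f\circ\eta\circ\rho),\\
P^\omega_{Z,var}(\Pi',T',f\circ\eta\circ\rho)&\le P^\omega_{Z,var}(\pi,T,f),
\end{align*}
together with the fact that $X'$ and $Y'$ are zero-dimensional so that Proposition \ref{prop4.11} is applicable to $\Pi':(X',T')\to(Y',S')$.

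Next, for statement (1), I would apply Proposition \ref{prop4.11}(1) to the zero-dimensional factor map $\Pi'$ to get $P^\omega_Z(\Pi',T',f\circ\eta\circ\rho)\le P^\omega_{Z,var}(\Pi',T',f\circ\eta\circ\rho)$. Chaining with the inequalities above yields
\[
P^\omega_Z(\pi,T,f)\le P^\omega_Z(\Pi',T',f\circ\eta\circ\rho)\le P^\omega_{Z,var}(\Pi',T',f\circ\eta\circ\rho)\le P^\omega_{Z,var}(\pi,T,f),
\]
and the reverse inequality $P^\omega_{Z,var}(\pi,T,f)\le P^\omega_Z(\pi,T,f)$ is precisely Proposition \ref{prop4.1}(1). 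Statement (2) is then carried out in parallel using Proposition \ref{prop4.11}(2) to bound $\sup_{z\in Z}P^\omega_Z(\Pi',T',z,f\circ\eta\circ\rho)$ by $P^\omega_{Z,var}(\Pi',T',f\circ\eta\circ\rho)$, then invoking Proposition \ref{prop4.10} again to push the variational quantity down to the original system, and finally Proposition \ref{prop4.1}(3) for the opposite direction. The last assertion $P^\omega_Z(\pi,T,f)=\sup_{z\in Z}P^\omega_Z(\pi,z,f)$ is then an immediate consequence since both sides coincide with $P^\omega_{Z,var}(\pi,T,f)$.

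There is essentially no new obstacle here; all of the work has been absorbed into the preparatory propositions. The only subtlety worth double-checking is that the zero-dimensional extension $\Pi'$ constructed in Proposition \ref{prop4.10} has the same common factor $(Z,R)$ (via $\psi\circ\eta\circ\rho$), so that $P^\omega_{Z,var}$ makes sense for it and the relative entropies transform correctly under the principal extensions $\eta$, $\rho$ by the Rohlin--Abramov identity (Theorem \ref{thm3.2}); this is exactly what Proposition \ref{prop4.10}(2) has already verified. Thus the proof is a short book-keeping argument chaining together Propositions \ref{prop4.1}, \ref{prop4.10}, and \ref{prop4.11}.
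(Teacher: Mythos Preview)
Your proposal is correct and follows essentially the same route as the paper: one direction from Proposition \ref{prop4.1}, the other by lifting to the zero-dimensional principal extension $(\Pi',T',f\circ\eta\circ\rho)$ via Proposition \ref{prop4.10}, applying Proposition \ref{prop4.11} there, and pushing the variational quantity back down. The paper's proof is exactly this chain of inequalities, with no additional ingredients.
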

\begin{proof}
  It directly follows from Proposition \ref{prop4.10} and Proposition \ref{prop4.11}.
\end{proof}

\begin{proposition}\label{prop4.16}
    Let $\kappa\in \mathcal{M}(Z,R)$ and let $z$ be a generic point of $\kappa$, namely,
    \begin{align*}
        \frac{1}{n}\sum_{i=0}^{n-1}\delta_{R^iz}\to  \kappa \text{ as }n\to \infty.
    \end{align*}
    Let $\varepsilon>0$. There exists $\mu\in \mathcal{M}(X,T)$ with $\nu=\pi\mu$ and $\kappa=\psi\mu$ such that
\begin{align*}
    P^\omega_Z(\pi,T,z,f,\varepsilon)\leq \omega h_\mu(T|R)+(1-\omega)h_\nu(S|R)+\omega\int_Xfd\mu.
\end{align*}
\end{proposition}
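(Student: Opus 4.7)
The proof will adapt the construction from Proposition \ref{prop4.11} to this setting, with one crucial modification: the approximating measures $\sigma_{n}$ must be chosen so that their Ces\`aro averages have $Z$-projection converging to $\kappa$, and this is exactly where the generic hypothesis on $z$ is used. No other ingredient is essentially new.

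For the given $\varepsilon>0$, choose Borel partitions $\mathcal{A}$ of $Y$ and $\mathcal{B}$ of $X$ with $\pi^{-1}\mathcal{A}\preceq\mathcal{B}$, $\diam(\mathcal{A},d')<\varepsilon$ and $\diam(\mathcal{B},d)<\varepsilon$. For each $n\in\mathbb{N}$ and each atom $B^{n,z}\in\mathcal{B}^{n,z}$ pick a maximizer $x_{B^{n,z}}$ of $\mathbb{S}_{n}f$ on $B^{n,z}$, define
\[\sigma_{n}=\frac{1}{W_{n,z}}\sum_{A^{n,z}\in\mathcal{A}^{n,z}}\sum_{B^{n,z}\in\mathcal{B}^{n,z}_{A^{n,z}}}(W_{A^{n,z}})^{\omega-1}e^{\mathbb{S}_{n}f(x_{B^{n,z}})}\delta_{x_{B^{n,z}}},\]
and set $\mu_{n}=\frac{1}{n}\sum_{s=0}^{n-1}T^{s}\sigma_{n}$. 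Choose a subsequence $n_{k}\to\infty$ along which $\mu_{n_{k}}\to\mu\in\mathcal{M}(X,T)$ weakly and along which $\frac{1}{n_{k}}\log P^{\omega}_{Z}(\pi,T,z,f,n_{k},\varepsilon)\to P^{\omega}_{Z}(\pi,T,z,f,\varepsilon)$; put $\nu=\pi\mu$.

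The genericity of $z$ yields $\psi\mu=\kappa$ almost for free: since $\sigma_{n}$ is supported on $\psi^{-1}(z)$ we have $\psi\sigma_{n}=\delta_{z}$, so $\psi\mu_{n}=\frac{1}{n}\sum_{s=0}^{n-1}\delta_{R^{s}z}$, which converges weakly to $\kappa$ by hypothesis and hence forces $\psi\mu=\kappa$. The entropy--pressure estimate now proceeds exactly as in Proposition \ref{prop4.11}. By Claim \ref{cl4.12} and Claim \ref{cl4.13},
\[\omega H_{\sigma_{n}}(\mathcal{B}^{n}|Z)+(1-\omega)H_{\pi\sigma_{n}}(\mathcal{A}^{n}|Z)+\omega\int_{X}\mathbb{S}_{n}f\,d\sigma_{n}=\log W_{n,z}\geq\log P^{\omega}_{Z}(\pi,T,z,f,n,\varepsilon),\]
and Claim \ref{cl4.14} converts this into a bound on $\frac{1}{m}H_{\mu_{n}}(\mathcal{B}^{m}|Z)$ and $\frac{1}{m}H_{\pi\mu_{n}}(\mathcal{A}^{m}|Z)$ up to an $O(m/n)$ error. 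Letting $k\to\infty$ along the chosen subsequence and then $m\to\infty$ gives
\[\omega h_{\mu}(T,\mathcal{B}|Z)+(1-\omega)h_{\pi\mu}(S,\mathcal{A}|Z)+\omega\int_{X}f\,d\mu\geq P^{\omega}_{Z}(\pi,T,z,f,\varepsilon),\]
and the bound $h_{\mu}(T,\mathcal{B}|Z)\leq h_{\mu}(T|R)$, $h_{\pi\mu}(S,\mathcal{A}|Z)\leq h_{\pi\mu}(S|R)$ finishes the proof.

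I expect two obstacles. The main conceptual one, and the reason the proposition is nontrivial, is enforcing $\psi\mu=\kappa$; this is precisely what the genericity of $z$ provides and what distinguishes this statement from Proposition \ref{prop4.11}. The main technical one is the boundary issue in passing to the weak$^{*}$ limit: one must ensure that $\partial\mathcal{A}$ and $\partial\mathcal{B}$ are $\mu$-null so that $H_{\mu_{n_{k}}}(\mathcal{B}^{m}|Z)\to H_{\mu}(\mathcal{B}^{m}|Z)$, which can be arranged by the standard trick of shrinking cell boundaries, or, more cleanly, by first applying Proposition \ref{prop4.10} to lift to a zero-dimensional principal extension where clopen partitions are available (accepting a harmless loss in $\varepsilon$).
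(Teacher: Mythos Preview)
Your proposal is correct and matches the paper's argument essentially line for line: reduce via Proposition \ref{prop4.10} to zero-dimensional $X$ and $Y$, build $\sigma_n$ and $\mu_n$ as in Proposition \ref{prop4.11}, use genericity of $z$ to force $\psi\mu_n=\frac{1}{n}\sum_{s=0}^{n-1}\delta_{R^sz}\to\kappa$, then invoke Claims \ref{cl4.12}--\ref{cl4.14}. The paper performs the zero-dimensional reduction at the outset rather than as an afterthought, but you correctly identify it as the clean way to handle the boundary issue, and your parenthetical ``harmless loss in $\varepsilon$'' is exactly the $\varepsilon\mapsto\delta$ shift coming from Propositions \ref{prop2.3}--\ref{prop2.4}.
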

\begin{proof}
    Because of Proposition \ref{prop4.10}, we just need to prove the conclusion that both $X$ and $Y$ are zero-dimensional.
From the construction of $\mu_n$ in Proposition \ref{prop4.11}, it is clear that if $\mu$ is a limit point of $\mu_n$, then $\psi\mu=\kappa$ as $z$ is a generic point of $\kappa$. Therefore, in this proposition, it suffices to take the $\mu$ constructed in Proposition \ref{prop4.11}.
    
\end{proof}

For a TDS $(X,T)$, we denote $E(X,T)$ by the set of all $T$-invariant ergodic measures on $(X,T)$.

\begin{theorem}
   Let $\pi:(X,T)\to (Y,S)$ and $\varphi:(Y,S)\to (Z,R)$ with $\psi=\varphi\circ\pi$, $f\in C(X)$ and $\kappa\in \mathcal{M}(Z,R)$. Given $0\leq \omega\leq 1$, we have 
    \begin{align*}
        \int_ZP_Z^\omega(\pi,T,z,f)d\kappa(z)=\sup\left(\omega h_\mu(T|R)+(1-\omega)h_{\pi\mu}(S|R)+\omega\int_Xfd\mu\right)
    \end{align*}
    where the supremum is taken over all $\mu\in \mathcal{M}(X,T)$ with $\kappa=\psi\mu$.
\end{theorem}
\begin{proof}
    The proof follows a similar procedure as in \cite{Lp}. 
    On the one hand,
    1. suppose $\kappa$ is ergodic, that is, $\kappa\in E(Z,R)$. Let $z$ be a generic point of $\kappa$ and $\varepsilon>0$. By Proposition \ref{prop4.16},
    \[P^\omega_Z(\pi,T,z,f,\varepsilon)\leq \sup_{\psi\mu=\kappa}\left(\omega h_\mu(T|R)+(1-\omega)h_{\pi\mu}(S|R)+\omega\int_Xfd\mu\right)=a.\]
    Since $\kappa$-a.e. $z\in Z$ are generic, we have
    \[\int_ZP^\omega_Z(\pi,T,z,f,\varepsilon)d\kappa(z)\leq a.\]
    But $P^\omega_Z(\pi,T,z,f,\varepsilon)\nearrow P^\omega_Z(\pi,T,z,f)$ as $\varepsilon\to 0$ and the function $P^\omega_Z(\pi,T,z,f,\varepsilon)$ are clearly bounded from below by $-||f||$. So $\int_ZP^\omega_Z(\pi,T,z,f)d\kappa(z)\leq a$ if $\kappa$ is ergodic.

     2. If $\kappa$ is not ergodic, let $\kappa=\int_{E(Z,R)}\kappa_\alpha d\rho(\alpha)$ be its ergodic decomposition.
    Let $\delta>0$, define
    \begin{align*}
        K_\delta=\{(\tau,\mu)&\in E(Z,R)\times\mathcal{M}(X,T)|\psi\mu=\tau,\\&\omega h_\mu(T|R)+(1-\omega)h_{\pi\mu}(S|R)+\omega\int_Xfd\mu\geq \int_ZP^\omega_Z(\pi,T,z,f)d\kappa(z)-\delta\}.
    \end{align*}
   Then $K_\delta$ is a measurable subset of $E(Z, R)\times\mathcal{M}(X, T)$, and we have shown above that $K_\delta$ projects onto $E(Z, R)$. Hence there is a section $K_\delta$, that is, a measurable map $\phi_\delta:E(Z,R)\to \mathcal{M}(X,T)$ such that 
    \[\rho\left(\{\tau|(\tau,\phi_\delta(\tau))\in K_\delta\}\right)=1.\]
    Define $\mu_\delta$ by $\mu_\delta=\int \phi_\delta(\kappa_\alpha)d\rho(\alpha).$

    Then $\mu_\delta\in \mathcal{M}(X,T),$ $\nu_\delta=\pi\mu_\delta$ and $\kappa=\psi\mu_\delta$ satisfying 
    \begin{align*}
        &\omega h_{\mu_\delta}(T|R)+(1-\omega)h_{\nu_\delta}(S|R)+\omega\int_Xfd\mu
        \\&=\int \omega h_{\phi_\delta(\kappa_\alpha)}(T|R)+(1-\omega)h_{\pi\phi_\delta(\kappa_\alpha)}(S|R)d\rho(\alpha)+\omega\int\left(\int_X fd\phi_\delta(\kappa_\alpha)\right))d\rho(\alpha)
        \\&\geq \int \left(\int P^\omega_Z(\pi,T,z,f)d\kappa(z)-\delta \right)d\rho(\alpha)
        \\&=\int_Z P^\omega_Z(\pi,T,z,f)d\kappa(z)-\delta.
    \end{align*}
Therefore, 
\[\int_Z P^\omega_Z(\pi,T,z,f)d\kappa(z)\leq \sup_{\psi\mu=\kappa}\left(\omega h_\mu(T|R)+(1-\omega)h_{\pi\mu}(S|R)+\omega\int_Xfd\mu\right).\]

 On the other hand, from Proposition \ref{prop4.1} we have known that 
    \[\int_Z P^\omega_Z(\pi,T,z,f)d\kappa\geq \sup_{\psi\mu=\kappa}\left(\omega h_\mu(T|R)+(1-\omega)h_{\pi\mu}(S|R)+\omega\int_Xfd\mu\right).\]
    Then the conclusion follows.
\end{proof}

\section*{Acknowledgement}
I thank Professor Xiongping Dai and Professor Dou Dou for their helpful discussions. The author also would like to thank the anonymous reviewers for their insightful comments.

\raggedright
\raggedright
\scriptsize{\textbf{Conflicts Statement}
The author declares that there are no conflicts of interest.

\textbf{Data Availability} All data generated or analyzed during this study are included in this article (and its supplementary information files).}

\vspace{0.5cm}

\address{School of Mathematics, Nanjing University, Nanjing 210093, People's Republic of China}

\textit{E-mail}: \texttt{yzy\_nju\_20@163.com}


\begin{thebibliography}{99}
\bibitem{Akm}
\newblock R. L. Adler, A. G. Konheim and M. H. McAndrew,
\newblock \textit{Topological entropy},
\newblock Trans. Amer. Math. Soc., \textbf{114} (1965), 309--319.

\bibitem{Bed}
\newblock T. Bedford,
\newblock \textit{Crinkly curves. Markov partitions and box dimension in self-similar sets},
\newblock Ph.D. Thesis, Univ. Warwick, 1984.

\bibitem{DZ}
\newblock A. H. Dooley and G. H. Zhang,
\newblock \textit{Local entropy of a random dynamical system},
\newblock Mem. Amer. Math. Soc., \textbf{233} (2015), 1--120.

\bibitem{Dow}
\newblock T. Downarowicz,
\newblock \textit{Entropy in dynamical systems},
\newblock Cambridge Univ. Press, 2011.

\bibitem{DH}
\newblock T. Downarowicz and D. Huczek,
\newblock \textit{Zero-dimensional principal extensions},
\newblock Acta Appl. Math., \textbf{126} (2013), 117--129.

\bibitem{DS}
\newblock T. Downarowicz and J. Serafin,
\newblock \textit{Fiber entropy and conditional variational principles in compact non-metrizable space},
\newblock Fund. Math., \textbf{172} (2002), 217--247.

\bibitem{FH}
\newblock D. Feng and W. Huang,
\newblock \textit{Variational principle for weighted topological pressure},
\newblock J. Math. Pures Appl., \textbf{106} (2016), 411--452.

\bibitem{HYZ}
\newblock W. Huang, X. Ye and G. Zhang,
\newblock \textit{A local variational principle for conditional entropy},
\newblock Ergod. Theory Dyn. Syst., \textbf{26} (2006), 219--245.

\bibitem{HYZ1}
\newblock W. Huang, X. Ye and G. Zhang,
\newblock \textit{Relative entropy tuples, relative u.p.e. and c.p.e. extensions},
\newblock Israel J. Math., \textbf{158} (2007), 249--283.

\bibitem{KP}
\newblock R. Kenyon and Y. Peres,
\newblock \textit{Measures of full dimension on affine-invariant sets},
\newblock Ergod. Theory Dyn. Syst., \textbf{16} (1996), 307--323.

\bibitem{Ke}
\newblock J. L. Kelley,
\newblock \textit{General topology},
\newblock Springer, 1955.

\bibitem{Lp}
\newblock F. Ledrappier and P. Walters,
\newblock \textit{A relativized variational principle for continuous transformations},
\newblock J. London Math. Soc., \textbf{16} (1977), 568--576.

\bibitem{Mcm}
\newblock C. McMullen,
\newblock \textit{The Hausdorff dimension of general Sierpinski carpets},
\newblock Nagoya Math. J., \textbf{96} (1984), 1--9.

\bibitem{Ro}
\newblock V. A. Rohlin,
\newblock \textit{On the fundamental ideas of measure theory},
\newblock Amer. Math. Soc. Transl., \textbf{71} (1952), 55 pp.

\bibitem{Rue}
\newblock D. Ruelle,
\newblock \textit{Statistical mechanics on a compact set with $Z^v$ action satisfying expansiveness and specification},
\newblock Trans. Amer. Math. Soc., \textbf{185} (1973), 237--251.

\bibitem{Tsu}
\newblock M. Tsukamoto,
\newblock \textit{New approach to weighted topological entropy and pressure},
\newblock Ergod. Theory Dyn. Syst., \textbf{43} (2023), 1004--1034.

\bibitem{Pw1}
\newblock P. Walters,
\newblock \textit{A variational principle for the pressure of continuous transformations},
\newblock Am. J. Math., \textbf{97} (1975), 937--971.

\bibitem{Pw}
\newblock P. Walters,
\newblock \textit{An introduction to ergodic theory},
\newblock Springer, 1982.

\bibitem{WY}
\newblock T. Wang and Y. Huang,
\newblock \textit{Weighted topological and measure-theoretic entropy},
\newblock Discrete Contin. Dyn. Syst., \textbf{39} (2019), 3941--3967.

\bibitem{YK}
\newblock K. Yan,
\newblock \textit{Conditional entropy and fiber entropy for amenable group actions},
\newblock J. Differential Equations, \textbf{259} (2015), 3004--3031.

\bibitem{GZ}
\newblock G. Zhang,
\newblock \textit{Relative entropy, asymptotic pairs and chaos},
\newblock J. London Math. Soc., \textbf{73} (2006), 157--172.



\end{thebibliography}
\end{document}